\documentclass[11pt]{amsart}
\usepackage{amsfonts}
\usepackage{epsf,graphicx}
\usepackage{amscd}
\usepackage{amsmath,latexsym,amssymb,amsthm}
\usepackage{hyperref}
\usepackage{setspace,cite}
\usepackage{lscape,fancyhdr,fancybox}
\usepackage{a4}
\usepackage{dsfont,texdraw}
\usepackage{mathtools}

\textwidth 6.5in \textheight8in \oddsidemargin0.00in
\evensidemargin0.00in

\def\Kmath{\mathbb{K}}

\newcounter{cnt1}
\newcounter{cnt2}
\newcounter{cnt3}
\newcommand{\blr}{\begin{list}{$($\roman{cnt1}$)$} {\usecounter{cnt1}
        \setlength{\topsep}{0pt} \setlength{\itemsep}{0pt}}}
\newcommand{\bla}{\begin{list}{$($\alph{cnt2}$)$} {\usecounter{cnt2}
        \setlength{\topsep}{0pt} \setlength{\itemsep}{0pt}}}
\newcommand{\bln}{\begin{list}{$($\arabic{cnt3}$)$} {\usecounter{cnt3}
                \setlength{\topsep}{0pt} \setlength{\itemsep}{0pt}}}
\newcommand{\el}{\end{list}}

\newtheorem{Thm}{Theorem}[section]
\newtheorem{Lem}[Thm]{Lemma}
\newtheorem{Prop}[Thm]{Proposition}
\newtheorem{Def}[Thm]{Definition}
\newtheorem{Exm}[Thm]{Example}
\newtheorem{Rem}[Thm]{Remark}
\newtheorem{Cor}[Thm]{Corollary}
\newtheorem{Note}[Thm]{Note}

\begin{document}

\title{Cohomology and deformations of Courant pairs}
\author{Ashis Mandal and Satyendra Kumar Mishra}
\footnote{The research of S.K. Mishra is supported by CSIR-SPM fellowship grant 2013 .}

\begin{abstract}{
In this note we define a notion of Courant pair as a Courant algebra over the Lie algebra of linear derivations on an associative algebra. We study formal deformations of Courant pairs by constructing a  cohomology bicomplex with coefficients in a module from the cochain complexes defining Hochschild cohomology and Leibniz cohomology.

}
\end{abstract}
%\date{ }
\footnote{AMS Mathematics Subject Classification (2010): $17$A$32, $ $17$B$66$, $53$D$17$. }
\keywords{ Courant algebra, Leibniz algebra, Lie algebra, cohomology of associative, Lie and Leibniz algebras }
%\newsymbol \rtimes 226F
\maketitle
\section{Introduction}
The aim of this article is to define a notion of Courant pair  as a Courant algebra over the Lie algebra $Der(A)$ of linear derivations on an associative algebra  $A$ and describe a one parameter formal deformation theory of Courant pairs. We construct a cohomology bicomplex using the Hochschild cohomology for associative algebras and cohomology of Leibniz algebras respectively. 

The notion of Courant algebras was defined by H. Bursztyn and coauthors ( \cite{BCG} ) in the context of reduction of Courant algebroids and generalized complex structures. It is used to interpret the moment map in symplectic geometry as an object which controls an extended part of the action of Courant algebras on Courant algebroids. By definition, a Courant algebra over a Lie algebra $\mathfrak{g}$  is a Leibniz algebra $L$ equipped with a Leibniz algebra homomorphism to the Lie algebra $\mathfrak{g}$. One special case of this type of algebras is an exact Courant algebra where the Leibniz algebra homomorphism is a surjective map onto $\mathfrak{g}$  and the kernel of the map is an abelian Lie algebra (\cite{BCG}, \cite{Mandal16}). In fact,  any Leibniz algebra $L$ can be viewed as an exact Courant algebra over the associated Lie algebra $L_{Lie}$ obtained by taking the quotient linear space ${L}/{L}^{ann}$, where $L^{ann}$ is also called the Leibniz kernel of $L$ is the module generated by elements of the form $[x,x]$ for $x \in L$. The notion of Leibniz algebras were introduced by A. Bloh \cite{Bloh65} under the name  $D$-algebras and it is rediscovered  by J.-L. Loday in connection with cyclic homology and Hochschild homology of matrix algebras (\cite{Lo1},\cite{Lo2}).  The (co) homology with coefficients in a representation associated to Leibniz algebras has been developed in \cite{LP} .  Any Lie algebra is also a Leibniz algebra, as in the presence of skew symmetry of the Lie bracket the Jacobi identity for the bracket is equivalent to the Leibniz identity.  It is also important to note that a Courant algebras can  be viewed as a model to get Leibniz algebras in the form of hemisemidirect products ( as introduced in \cite{KW} ) by starting with a Lie algebra $\mathfrak{g}$ and a $\mathfrak{g}$-modules. One of the Lie algebras  mostly encountered in geometry or in mathematical physics is the Lie algebra $\chi(M)$ of smooth vector fields on a smooth manifold $M$. This Lie algebra (which is also a Leibniz algebra ) can also be described as the space of linear derivations on the associative algebra $C^{\infty}(M)$ of smooth functions on the given manifold.  Here, we are interested in finding Leibniz algebras as a canonical extension of the Lie algebra $Der(A)$ of linear derivations on an arbitrary associative algebra $A$ both over the same coefficient ring $\mathbb{K}$. This leads us to consider a notion of Courant pair denoted simply by $(A,L)$. The deformation of algebraic and analytic structures are an important aspects if one studies their properties. They characterize the local behaviour in a small neighbourhood in the variety of a given type of objects.

A Courant pair $(A, L)$ consists of an associative algebra $A$ and a Leibniz algebra $L$ over the same coefficient field $\mathbb{K}$, equipped with a Leibniz algebra homomorphism  $\mu :L \longrightarrow \mathfrak{g}$, where $\mathfrak{g}$ denotes the Lie algebra of $\mathbb{K}$-linear derivations on $A$ under the commutator bracket. If we replace the Leibniz algebra in the definition by a Lie algebra then what we obtain is a Leibniz pair $(A, L)$ introduced by M. Flato et al. in \cite{LeibP}. The notion of Leibniz pair was defined as a generalization of the Poisson algebras and a formal deformation theory is studied in \cite{LeibP}. Later on it is found as a special case of a homotopy algebra introduced by H. Kajiura and J. Stashef in \cite{KS06}. They referred this as an open closed homotopy algebra (OCHA). A Lie algebroid or its algebraic counterpart, called a Lie- Rinehart algebra (\cite{Rin}) is also a special type of Leibniz pair by considering associative algebras also  to be a commutative algebra.  

A Courant pair appears naturally when one consider a Lie algebroid ( \cite{LieA} ) or more generally a Leibniz algebroid ( \cite{FreeN} ) ( or Loday algebroid, Courant algebroid ( \cite{CARoth}, \cite{FreeN}, \cite{BCG}, \cite{KW}, \cite{RW} ) ) over a smooth base manifold. In all these cases the underline Lie algebra of the Courant algebras are  considered to be the Lie algebra of vector fields or space of linear derivations of the associative algebra of functions on the manifold. More examples can be found if one allow Leibniz algebras in place of Lie algebra $L$ in the definition of a Leibniz pairs $ (A,L)$. Here, we consider this more general case of Leibniz pair in the name of Courant pair which will form a category containing the category of Leibniz pairs as a full subcategory. The cohomology space is different if one consider a Leibniz pair as an object in the larger category of Courant pairs. 

The paper is organized as follows:
In section $2$, we recall the Hochschild cohomology of an associative algebra with coefficient in a bimodule, a Leibniz algebra cohomology complex with coefficient in a representation and the notion of Courant algebras with examples.
 Section $3$ contains definition and various natural examples of the notion of Courant pairs. We discuss about representations or modules over a Courant pair as a suitable semi-direct products in the category of associative and Leibniz algebras respectively with corresponding modules.
In section $4$, we present the construction of a deformation bicomplex for a Courant pair $(A, L)$ from  Hochschild cohomology complex of the associative algebra $A$ and Leibniz cohomology complex for the Leibniz algebra  $L$.  In section $5$ we define one parameter formal deformation of a Courant pair, define the notion of equivalence of deformations, and study rigidity. In the last section we discuss about the obstruction cochains that arises in extending deformation of finite order with given infinitesimal. We mentioned examples to show that one can find many more deformations of a Leibniz pair when it is deformed as a Courant pair by using the  new bicomplex cohomology.

One of the main results in any deformation theory is to prove that obstruction cochains are cocycles. In our context, this consists of three parts; one arising from deformations of the considered associative algebra, one from the deformation of the Leibniz algebra  and the other from deformation of the Leibniz homomorphism  of a Courant pair. In \cite{G1}, M. Gerstenhaber showed that the obstruction cochains arising from deformations of associative algebras are cocycles and in \cite{B }  D. Balavoine showed  the obstruction cochains arising from deformations of Leibniz algebras are cocycles. The other part is done by a direct computation and given in an appendix.

Throughout we will consider vector spaces over a field $\mathbb{K}$ of characteristics zero and all maps are $\mathbb{K}$- linear unless otherwise it is specified . 

\section{Preliminaries}

%In this section, we will consider $\Kmath$ as a field of characteristic zero.
\subsection{Associative algebras and its cohomology}

\begin{Def}
 Let $A$ be an associative algebra over $\mathbb{K}$, a bimodule $M$ over $A$ or, an $A$-bimodule is a  $\mathbb{K}$-module equipped with two actions (left and right) of $A$
$$ A\times M \longrightarrow M ~~\mbox{and}~ M\times A \longrightarrow M$$ such that 
$(a m) a^\prime=a (m a^\prime)$ for $a,a^\prime \in A$ and $m \in M$.

The actions of $A$ and $\mathbb{K}$ on $M$ are compatible, that is $(\lambda a) m=\lambda (a m)=a (\lambda  m)$ for $\lambda \in \mathbb{K},~a\in A ~and ~m \in M$. When $A$ has the identity $1$ we always assume that $1 m=m 1=m ~~\mbox{for}~ m \in M$.
\end{Def}
Given a bimodule $M$ over $A$, the  Hochschild cochain complex of $A$ with coefficients in $M$ is defined as follows.
Set $C^n(A,M)= Hom_\Kmath (A^{\otimes n},M)~; n\geq 0$ where $A^{\otimes n}=A \otimes \cdots \otimes A~(n\mbox{-copies})$.
Let $\delta_H:C^n(A,M) \rightarrow C^{n+1}(A,M)$ be the $\mathbb{K}$- linear map given by 
\begin{equation*}
 \begin{split}
\delta_H f(a_1,a_2,\ldots,a_{n+1})&=a_1.f(a_2,\ldots,a_n)+\sum_{i=1}^{n}(-1)^i (a_1,\ldots,a_i a_{i+1},\ldots,a_{n+1})\\
&~~~~~~~~~~~~~~~~~~~~~~+(-1)^{n+1} (a_1,\ldots,a_{n-1},a_{n}).a_{n+1} 
\end{split}
\end{equation*} 
Then $\delta_H^2=0$ and the complex $(C^{*}(A,M),\delta_H)$ is called the Hochschild complex of $A$ with coefficients in the $A$-bimodule $M$.
 When $M=A$, where the actions are given by algebra operation in $A$ we denote the complex $C^{*}(A,A)$ by $C^{*}(A)$. The graded space $C^*(A)$ is a differential graded Lie algebra (or DGLA in short ) (see \cite{G1}).
\begin{Def}
A graded Lie algebra $\mathfrak{L}$ is a graded module $\mathfrak{L}=\{L_i\}_{i\in \mathbb{Z}}$ together with a linear map of degree zero, $[-,-]:\mathfrak{L}\otimes \mathfrak{L}\longrightarrow \mathfrak{L}$, $x\otimes y\mapsto [x,y]$ satisfying
\begin{equation*}
 \begin{split}
  &(i)[x,y]=-(-1)^{|x||y|}[y,x] \mbox {(graded skewsymmetry)}\\
&(ii)(-1)^{|x||z|}[x,[y,z]]+(-1)^{|y||x|}[y,[z,x]]+(-1)^{|z||y|}[z,[x,y]]=0 ~~\mbox{( graded Jacobi identity )}\\
 \end{split}
\end{equation*}
for $x,y,z \in \mathfrak{L}$, where $|x|$ denotes the degree of $x$.

A differential graded Lie algebra is a graded Lie algebra equipped with a differential $d$ satisfying 
$$d[x,y]=[dx,y]+(-1)^{|x|}[x,dy].$$
\end{Def}
\begin{Rem}\label{DGLA}
The shifted Hochschild complex $C^{*}(A,A)=C^{*+1}(A, A)$ is a DGLA where the graded Lie bracket is also called the Gerstenhaber bracket. 

Let $\phi$ be a Hochschild $p$-cochain and $\theta$ be a Hochschild $q$-cochain. The Gerstenhaber bracket of $\phi$ and $\theta$ is the $(p+q-1)$-cochain defined by
$$[\phi,\theta]=\phi\circ\theta-(-1)^{(p-1)(q-1)}\theta\circ \phi,$$
where
$$\phi\circ\theta(a_1,\cdots,a_{p+q-1})=\sum_{i=0}^{p-1}(-1)^{i(q+1)}\phi(a_1,\cdots,a_i,\theta(a_{i+1},\cdots,a_{i+q}),a_{i+q+1},\cdots,a_{p+q-1}).$$

Also, note that if $\alpha_0$ is the associative multiplication, then $\delta_H$ can be written in terms of the Gerstenhaber bracket and the multiplication $\alpha_0$:
$$\delta_H \phi=(-1)^{|\phi|}[\alpha_0,\phi].$$
\end{Rem}
\subsection{Leibniz algebra and its cohomology}
Now, we recall the definition of a Leibniz algebra and describe its cohomology.

\begin{Def} A  Leibniz algebra (left Leibniz algebra) is a $\Kmath$-module $L$, equipped with a bracket
 operation, which is $\mathbb{K}$-bilinear and satisfies the Leibniz identity,
 $$
 [x,[y,z]]=[[x,y],z] + [y, [x,z]] \quad for \quad x,y,z \in L.
 $$
\end{Def}
In the presence of antisymmetry of the bracket operation, the Leibniz identity is
equivalent to the Jacobi identity, hence any Lie algebra is a Leibniz
algebra.
\begin{Rem}
For a given Leibniz algebra $L$ as defined above,  the left adjoint operation $[x,-]$ is a derivation on $L$ for any
$x \in L.$  Analogously, Leibniz algebra ( right Leibniz algebra ) can be defined by requiring that the right adjoint map $[-,x]$ is  a derivation for any $x \in L.$ In that case, the Leibniz identity appeared in the above definition would be of the form: $$[x,[y,z]]= [[x,y],z]-[[x,z],y] ~~\mbox{for}~x,~y,~z \in L.$$
 Sometime, in the literature ( e.g., \cite{Lo1, Lo2, LP}) right Leibniz algebra has been considered.  All our Leibniz algebras in the present discussion will be left Leibniz algebras unless otherwise stated.
\end{Rem}
\begin{Exm}\label{Lex1}
Let $(\mathfrak{g},[-,-])$ be a Lie algebra and  $V$ be a $\mathfrak{g}$-module with the action $~(x,v)\mapsto x.v$. Take $L=\mathfrak{g}\oplus V $ with the bracket given by
$$[(x,v),(y,w)]_L=([x,y],x.w)~~\mbox{for}~~(x,v), (y,w) \in L .$$
Then $(L,[-,-]_L)$ is a Leibniz algebra. This bracket is called {\it hemisemidirect} product in \cite{KW}.
\end{Exm}

This shows that one can associate a Leibniz algebra $L$ to a smooth manifold $M$. If $\mathfrak{g}$ is the Lie algebra of Vector fields $\chi(M)$ over a smooth manifold $M$, then $C^{\infty}(M)$ is a $\chi(M)$-module with the left action $\chi(M) \times C^{\infty}(M) \rightarrow C^{\infty}(M):~(X,f)\mapsto X(f)$. It follows that  $L= \chi(M)\oplus C^{\infty}(M)$ is a Leibniz algebra with the bracket given by
$$[(X,f),(Y,g)]=([X,Y],X(f)).$$
 There are various other sources to generate more examples of  Leibniz algebras which may not be a Lie algebra.
\begin{Exm}
Let $A$ be an associative $\Kmath$-algebra equipped with a $\Kmath$-linear map $D: A\rightarrow A$, such that $D^2=D.$ 
Define a bilinear map $[-,-]: A\otimes A \rightarrow A$ by 
$$[x,y]:=(Dx)y-y(Dx)~~~ \mbox{for all}~ a,b\in A. $$
Then $(A,[-,-])$ is a left Leibniz algebra. In general, $A$ with the above bracket is not a Lie algebra unless $D=id.$
\end{Exm}

\begin{Exm}
Let $(L,d)$ be a differential Lie algebra with the Lie bracket $[-,-]$. Then $L$ is a Leibniz algebra with the bracket operation $[x,y]_d:= [dx,y]$. The new bracket on $L$ is  called the derived bracket(see \cite{DerB}).
\end{Exm}
One can find more examples appeared in  \cite{Lo1, Lo2, LP} , \cite {HM02, JMF09}.

\begin{Def}
Suppose  $L$ and $L^{\prime}$ are Leibniz algebras, a linear map $\phi: L \rightarrow L^{\prime}$ is called a homomorphism of Leibniz algebras if it preserves the Leibniz bracket, i.e.
$$\phi([x,y]_L)= [\phi(x), \phi (y)]_{L^{\prime}} ~~\mbox{for all}~x,y \in L. $$
\end{Def}

Leibniz algebras with Leibniz algebra homomorphisms form a category of Leibniz algebras, which contains the category of Lie algebras as a full subcategory.
%\begin{Def}

Let $L$ be a Leibniz algebra and $M$ be a representation of $L$. By
definition (\cite{LP,JMF09}), $M$ is a $\Kmath$-module equipped with two actions (left and
right) of $L$,
$$
[-,-]:L \times M \rightarrow M \qquad \text{and} ~~~[-,-]:M \times L
\rightarrow M
$$
such that for $m \in M$ and $x,y \in L$ following hold true,
\begin{equation*}
\begin{split}
[m,[x,y]] =~ & [[m,x],y] + [x,[m,y]]\\
[x,[m,y]]=  ~& [[x,m],y] + [m,[x,y]]\\
[x,[y,m]]= ~ &[[x,y],m] + [y,[x,m]].
\end{split}
\end{equation*}
%\end{Def}
Define $CL^n(L;M):=\text{Hom}_{\Kmath}(L^{\otimes n},M)$, $n \geq 0$.
Let
$$
\delta^n: CL^n(L;M) \rightarrow CL^{n+1}(L;M)
$$
be a $\Kmath$-homomorphism defined by
\begin{multline*}\label{Leibniz coboundary}
(\delta^n \psi)(X_1,X_2, \cdots , X_{n+1}) \\=\sum_{i=1}^{n} (-1)^{i-1} [X_i,\psi(X_1, \cdots, \hat{X_i}, \cdots, X_{n+1})]
 +(-1)^{n+1} [\psi(X_1, \cdots, X_{n}), X_{n+1}]  \\+
 \sum_{1\leqslant i < j \leqslant n+1} (-1)^{i}
 \psi(X_1, \cdots, \hat{X_i},\cdots, X_{j-1},[X_i,X_j],X_{j+1}\cdots, X_{n+1}).
\end{multline*}
Then $(CL^*(L;M),\delta)$ is a cochain complex, whose cohomology is denoted by $HL^*(L; M)$, is
called the cohomology of the Leibniz algebra $L$ with coefficients in
the representation $M$ (see \cite{LP, JMF09}). In particular, $L$ is a representation of
itself with the obvious actions given by the Leibniz algebra bracket of $L$.

\subsection{Courant algebras over a Lie algebra}
Now, we recall the definition of  Courant algebras  from \cite{BCG}. Let $\mathfrak{g}$  be a Lie algebra over $\mathbb{K}$.
\begin{Def}\label{Courant algebra}
A {\it Courant algebra }  over a Lie algebra $\mathfrak{g}$  is a Leibniz algebra $L$ equipped with a homomorphism of Leibniz algebras $\pi: L\rightarrow\mathfrak{g}$. We will simply denote it as $\pi: L \longrightarrow \mathfrak{g}$ is a Courant algebra.
\end{Def}

\begin{Exm}
A Courant algebroid over a smooth manifold $M$  gives an example of a Courant algebra over the Lie algebra of vector fields $\mathfrak{g}=\Gamma(TM)$ by taking $\mathfrak{a}$ as the Leibniz algebra structure on the space of sections of the underlying vector bundle of the Courant algebroid.  From \cite{RW} it follows that any Courant algebra is actually an example of a $2$- term $L_\infty$ algebra \cite{BC,St}.
\end{Exm}

\begin{Def}\label{exact Courant algebra}
An  {\it exact} Courant algebra  over a Lie algebra $\mathfrak{g}$ is a Courant algebra $\pi: L \longrightarrow \mathfrak{g}$ for which $\pi$ is a surjective linear map and $\mathfrak{h}=ker (\pi)$ is abelian, i.e. $[h_1,h_2]=0$ for all $h_1,h_2 \in \mathfrak{h}$.
\end{Def}
For an exact Courant algebra $\pi: L \longrightarrow \mathfrak{g}$, there are two canonical actions of  $\mathfrak{g}$ on $\mathfrak{h}$: 
$[g,h]=[a,h]$ and $[h,g]=[h,a]$ for any $a$ such that $\pi(a)=g$. Thus the $\mathbb{K}$-module $\mathfrak{h}$ equipped with these two actions denoted by the same bracket notation $[-,-]$, is a representation of $\mathfrak{g}$ ( viewed as a Leibniz algebra ). 

The next example will give a natural exact Courant algebra associated with any representation of the Lie algebra $\mathfrak{g}$.

\begin{Exm} As in Example \ref{Lex1}, $\mathfrak{g}$ be a Lie algebra acting on the vector space $V$. Then
$L =\mathfrak{g}\oplus V$ becomes a Courant algebra over $\mathfrak{g}$ with the Leibniz algebra homomorphism given by projection on $\mathfrak{g}$  and the Leibniz bracket on $L$ given by
$$ [(g_1,h_1),(g_2,h_2)]=([g_1,g_2], g_1. h_2),$$
where $g.h$ denotes the action of the Lie algebra $\mathfrak{g}$ on $V$.
\end{Exm}
In \cite {Mandal16} it is shown that exact Courant algebras over a Lie algebra $\mathfrak{g}$ can be characterised via Leibniz $2$- cocycles, and the automorphism group of a given exact Courant algebra is in a one-to-one correspondence with first Leibniz cohomology space of $\mathfrak{g}$. Exact Courant algebras also appeared in the general study of Leibniz algebra extension and a discussion of some unified product for Leibniz algebras is in \cite{AgMi13, Mil15}.

\begin{Exm}

If we consider  a Leibniz representation $\mathfrak{h}$ of the Lie algebra $\mathfrak{g}$, then
$L =\mathfrak{g}\oplus \mathfrak{h}$ becomes a Courant algebra over $\mathfrak{g}$ via the bracket
$$ [(g_1,h_1),(g_2,h_2)]=([g_1,g_2], [g_1, h_2]+[h_1,g_2]),$$
where the actions (left and right) of $\mathfrak{g}$ on $\mathfrak{h}$ are denoted by the same bracket $[-,-]$.
\end{Exm}

\section{Courant pairs}
In this section we introduce the notion of a Courant pair and discuss about various natural examples . We define  representation or modules of a Courant pair as a suitable semi-direct product in the category of associative and Leibniz algebras respectively with corresponding modules.

Let $A$ be an associative algebra over $\mathbb{K}$ and by $\mathfrak{g}$, we denote the Lie algebra $(Der_{\mathbb{K}}(A), [-,-]_c)$ of linear derivations of $A$ with commutator bracket  $[-,-]_c$.
 Now we define a Courant pair as a Courant algebra over the Lie algebra $\mathfrak{g}$ which is canonically associated to a given associative algebra $A$.
\begin{Def}
 A Courant pair  $ (A,L)$ consists of an associative algebra $A$ and a Leibniz algebra $L$  over the same  coefficient ring $\mathbb{K}$, and equipped with  a Leibniz algebra homomorphism
  $\mu: L\longrightarrow \mathfrak{g} $.
\end{Def}

\begin{Rem}\label{RemC}
It follows from the definition that a Courant pair $(A, L )$ can be expressed as a triplet of the involved algebra operations and homomorphism $(\alpha,\mu,\lambda)$. Here,  $\alpha : A\times A\rightarrow A$, denotes the associative multiplication map on $A$, $\lambda : L\times L\rightarrow L $ denotes the Leibniz algebra bracket in $L$ and the homomorphism $\mu$ defines an action $\mu: L\times A\rightarrow A$ given by $\mu(x,a)=\mu(x)(a)$ for all $x \in L$ and $a \in A$.
\end{Rem}
Every Leibniz pair  defined in \cite{LeibP} is an example of a Courant pair. In fact, if $(A, L)$ is a Leibniz pair then by definition, $A$ is an associative algebra and $L$ is a Lie algebra over some common coefficient ring $\Kmath$, connected by a Lie algebra morphism $\mu: L \rightarrow \mathfrak{g}$. And since the category of Lie algebras is a subcategory of the Leibniz algebra category, it follows that a Leibniz pair is also a Courant pair. In particular,
\begin{enumerate}
  \item a Poisson algebra $A$ gives a Courant pair $(A,A)$ with the Leibniz algebra homomorphism $\mu=id$;
  \item any Lie algebroid $(E,[-,-], \rho)$ gives a Courant pair $(C^{\infty}(M),\Gamma E)$ with  the Leibniz algebra homomorphism $\mu = \rho$;. 
  \item any Lie- Rinehart algebra $(A, L)$ is also a Courant pair satisfying the additional conditions that A as a commutative algebra, $L$ as an $A$-module and $\mu: L \rightarrow \mathfrak{g}$ as an $A$-module morphism such that $[x,fy]=f[x,y]+\mu(x)(f)y;$ for $x,y\in L$ and $f\in A$. 
   \end{enumerate}
\begin{Exm}
Let A be an associative commutative algebra and a module over the Lie algebra $\mathfrak{g}=(Der(A),[-,-]_c)$, where  $\mathfrak{g}$ acts on $A$ via derivations. As in  Example (2.4), if we consider the Leibniz algebra $L = Der(A)\oplus A$ , then $(A,Der(A)\oplus A)$ is a courant pair, where the Leibniz algebra morphism $\mu$ is given by
the projection map from $Der(A)\oplus A$ onto $Der(A)$.
\end{Exm}
\begin{Rem}
In particular, for $A$ to be the space of smooth functions on a smooth manifold this type of examples of Courant pair appears as twisted Dirac structure of order zero in \cite{ACR}.
\end{Rem}
In the sequel, one can find  Courant pairs associated to an algebra by considering modules and  flat connections on the module of the algebra.  

Let $A$ be an associative algebra and $V$ be an unitary $A$-module. A derivation law, or Koszul connection on $V$ is an $A$-linear mapping $\nabla: Der(A) \rightarrow Hom_R(V,V)$ ( the image of $X\in Der(A)$ is denoted by $\nabla_X$) such that
$\nabla_X(f.v)=f.\nabla_X(v)+X(f).v$
for $X\in Der(A),~ f\in A,~\mbox{and}~ v\in V$. Further, the connection $\nabla$ is called flat connection if 
$\nabla_{[X,Y]}= [\nabla_X,\nabla_Y]_c.$
Now, $(Der(A),[-,-]_c)$ is a Lie algebra. If $\nabla$ is a flat connection on $V$, then $V$ is a $Der(A)$-module. Therefore, hemisemidirect product of $Der(A)$ and $V$ defines a Leibniz bracket on the A-module $Der(A)\oplus V$ as
  $$[(X,v),(Y,w)]=([X,Y],\nabla_X(w)),$$
where $v,w\in V$. Therefore, we have a Courant pair $(A,Der(A)\oplus V)$ with Leibniz algebra morphism $\mu$ is given by the projection map onto $Der (A) $.   

Now, for a Lie algebroid $(E,[-,-],\rho)$ over $M$ and a flat $E$-connection (\cite{LieA}) on a Vector bundle $F\rightarrow M$, there is an associated Courant pair $(A, L)$ where $A$ is algebra of smooth functions on $M$ and $L$ is the hemisemidirect product Leibniz algebra obtained from  the space of sections of the Lie algebroid and the vector bundle respectively.

\begin{Exm}
 Let $(E,[-,-],\rho)$ be a Lie algebroid over a manifold $M$. An $E$-connection on a vector bundle $F\rightarrow M$ is a $\Kmath$-linear map 
$$\nabla: \Gamma E \rightarrow Hom_{\Kmath}(\Gamma F,\Gamma F)$$ 
satisfying following properties
$$\nabla_{(f.X)}(\alpha)= f.\nabla_{X}(\alpha),$$
$$\nabla_X(f.\alpha)= f.\nabla_X(\alpha)+ \rho(X)(f).\alpha,$$
for $X\in \Gamma E$, $f\in C^{\infty}(M)$, and $\alpha \in \Gamma F$. Here, $\nabla(X)$ is denoted by $\nabla_X$. An $E$-connection is called flat if $\nabla_{[X,Y]}=[\nabla_X,\nabla_Y]_c$. A flat $E$-connection on a vector bundle $F\rightarrow M$ is also called a representation of E on $F\rightarrow M$ (recall  from \cite{LieA}). Now, $\Gamma E$ is a Lie algebra and $\Gamma F$ is a $\Gamma E$-module (it follows from flatness of $\nabla$). Therefore, by taking hemisemidirect product of $\Gamma E$ and $\Gamma F$, we have a Leibniz algebra bracket on
$\Gamma E \oplus \Gamma F$ given by
$$[(X,\alpha),(Y,\beta)]=([X,Y],\nabla_X(\beta)) $$ 
for $X,Y \in \Gamma E$ and $\alpha,~ \beta\in \Gamma F $. Consider $A =C^{\infty}(M)$, and $L=\Gamma E \oplus \Gamma F$, then $(A,L)$ is a Courant pair with the Leibniz algebra morphism:
$$\mu :\Gamma E \oplus \Gamma F \rightarrow \chi(M)=Der(A)$$ 
where $\mu=\rho \circ \pi_{1}$, and $\pi_{1}$ is the projection map onto $\Gamma E$. 
\end{Exm} 

\begin{Exm}
Let $R$ be a commutative unital ring and $A$ be a commutative unital $R$-algebra.  Recall from  \cite{FreeN},  a Leibniz pseudoalgebra $(\mathcal{E},[-,-],\rho)$ over $(R,A)$ consists of an $A$-module $\mathcal{E}$, an $A$-module homomorphism $\rho:\mathcal{E}\rightarrow Der_R(A)$ and a Leibniz $R$-algebra structure $[-,-],$ on $\mathcal{E}$  such that
$$[X,fY]=f[X,Y]+\rho(X)(f)Y,$$
for all $f\in A$ and $X,Y\in \mathcal{E}$.  
Every Leibniz pseudoalgebra $(\mathcal{E},a,[-,-])$ over $(R,A)$ gives a Courant pair $(A,\mathcal{E})$. 

In particular, any Leibniz algebroid $E$ over a smooth manifold $M$ is a Leibniz pseudoalgebra over $(\mathbb{R},C^{\infty}(M))$. So, it gives a Courant pair $( C^{\infty}(M), \Gamma(E) ).$
\end{Exm}
\begin{Exm}
Suppose $\mathcal{A}$ is a associative and commutative algebra. Let $\Omega^1_{\mathcal{A}}$ be the $\mathcal{A}$-module of K$\ddot{\mbox{a}}$hler differentials(\cite{Khlr}), with the universal derivation $$ d_{0} : \mathcal{A}\rightarrow \Omega^1_{\mathcal{A}}.$$ Consider the $\mathcal{A}$-module $Der(A)\oplus \Omega^1_{\mathcal{A}}$ equipped with
the Leibniz bracket  given by $$[(X,\alpha),(Y,\beta)]=([X,Y],L_X\beta-i_Y d_0 \alpha).$$
 Then $(A,Der(A)\oplus \Omega^1_{\mathcal{A}})$ is a Courant pair. This shows that a Courant pair is also appeared in the context of Courant -Dorfman algebras introduced in \cite{CHRtb}.
\end{Exm}

\begin{Rem}
A Courant-Dorfman algebra (\cite{CHRtb}) has a underlined Leibniz pseudoalgebra (or Leibniz-Rinehart algebra )  and hence a Courant pair structure in it.
\end{Rem}

\begin{Exm}
Any Courant algebroid $(C,[.,.],\rho,<.,.>)$ (algebraic counterpart defined in \cite{CARoth}) is a Leibniz pseudoalgebra  $(C,[.,.],\rho)$. So, any Courant algebroid has a underlined Courant pair.
\end{Exm}

\subsection{Modules of Courant pairs:}
 Suppose $(A, L)$ is a Courant pair. We define modules of $(A, L)$ as a suitable Courant algebra by considering semi- direct products of associative and Leibniz algebras with respective modules of these algebras.
 Suppose that $M$ is an $(A, A)$-bimodule and $P$ an $L$-module. We denote by $ A+M$ and $L+P$, respectively, the associative and Leibniz semi-direct products. Now we define modules over a Courant pair, using the following general principle: if $(A,L)$ is a Courant algebra, then an $(A,L)$ -module is a pair $(M,P)$ such that $(A\oplus M, L\oplus P)$ is also a Courant pair and contains (A,L) as a subobject and $(M,P)$ as an abelian ideal in an
appropriate sense.

\begin{Def}
 A module over a Courant pair $(A,L)$  means a pair $(M,P)$, where $P$ is a Leibniz algebra module over $L$, $M$ is an $(A, A)$-bimodule, and there is a Leibniz algebra homomorphism $\tilde{\mu}:~(L+P) \longrightarrow Der_{\mathbb{K}}(A+M)$ extending the Leibniz algebra morphism $\mu:~L\rightarrow DerA $ in the Courant pair $(A, L)$ in the following sense:
 \begin{enumerate}
 \item $\tilde{\mu}(x,0)(a,0)=\mu(x)(a)$   for  any $x\in L,~a\in A.$\\
 \item $\tilde{\mu}(x,0)(0,m),~\tilde{\mu}(0,\alpha)(a,0)\in M$   for any $x\in L,~ a\in A,~\alpha\in P,~m\in M. $\\
 \item $\tilde{\mu}(0,\alpha)(0,m)=0$ for any $\alpha\in P,~ m\in M.$
 \end{enumerate}
 \end{Def}
 
 \begin{Rem}
 Every Courant pair $(A, L)$ is a module over itself. This fact will be useful in the later section while considering deformations. We may also canonically define morphisms of Courant pairs and hence morphisms of modules of a given Courant pair $(A, L)$.
 \end{Rem}

 \begin{Note}\label{phi}

 It is important to notice here that for each $\alpha \in P$  the map $A \longrightarrow M$ defined by sending $a$ to $\tilde{\mu}(0,\alpha)(a,0)$ is a derivation of $A$ into $M$. This yields  a  map $\phi: P\rightarrow Der(A,M)$, which we will use in the double complex defined later.
\end{Note}
 
\section{Deformation complex of a Courant pair:}
In the present section, we introduce the deformation complex of a Courant pair. We shall observe subsequently that the second and the third cohomologies associated to this cohomology complex encode all the  information about deformations.
 Suppose (M,P) is a module of the Courant pair $ (A, L )$. So $M$ is an $A$-bimodule and $P$ is a Leibniz-$L$-module. We need to consider the Hochschild complex $C^{*}(A, M)$ and Leibniz algebra complex $C^{*}( L,P )$ in the sequel.  Here we will denote the tensor modules $\otimes^{p}A$ simply by $A^p$.

 Let $C^p(A,M)=Hom(A^p,M)$, the $p$-th Hochschild cochain group of $A$ with coefficients in $M$. Note that $C^0(A,M)=M$. Now $C^p=C^p(A,M)$ is  an $L$ - module (symmetric Leibniz algebra module) by the actions given as
 $$[x,f](a_1,a_2,...,a_p)=[x,f(a_1,a_2,...,a_p)]-\sum_{i=1}^p f(a_1,...,[x,a_i],...,a_p)$$
 and $[f,x]=-[x,f]$ for $f\in C^p$, $a_1, \cdots,a_p \in A$ and $x\in L $.

 Here, $[x,m]=-[m,x]=\tilde{\mu}(x)(m)$
%$~[p,a]=-[p,a]=\tilde{\mu}(p)(a)~$
 for$~a\in A,~m\in M,$ and $~x\in L $.
 One can check  that,
 $$[[x,y],f]=[x,[y,f]]-[y,[x,f]],$$
for $x,y \in L$ and $f \in C^p$.
 It follows that the Lie algebra $Der(A,M)$ of linear derivation from $A$ to $M$ is a Leibniz algebra submodule of $C^1(A,M)$ and the mapping $\phi: P\rightarrow Der(A,M)$ defined in the previous section in ( \ref{phi}) is a Leibniz algebra module homomorphism.
In otherwords,
 $[x,f]\in Der(A,M)$ for $x\in L$ and $f\in Der(A,M).$
 Also, $$\phi[x,p]=[x,\phi(p)]~\mbox{(using the Leibniz algebra structure on the semi-direct product space $L+P$)}.$$
 Recall the Hochschild coboundary $\delta_H:~C^p\rightarrow C^{p+1}$ is given by
\begin{equation*}
\begin{split}
&(\delta_Hf)(a_1,...,a_{p+1})\\                                             
=&a_1f(a_2,...,a_{p+1})+\sum_{i=1}^p(-1)^if(a_1,...,a_i.a_{i+1},...,a_{p+1})+(-1)^{p+1}f(a_1,...,a_p)a_{p+1}.        
\end{split}
\end{equation*}
Now it follows that $\delta_H$ is a Leibniz algebra module homomorphism by using the definitions of the map $\delta_H$, and $L$-module action on $C^p$.
% i.e. $$\delta_H([x,f])=[x,\delta_Hf].$$
\begin{Prop}
 Let (M,P) be a module over a Courant pair (A,L). Then $\delta_H:~C^p\rightarrow C^{p+1}$ is a homomorphism of Leibniz algebra modules over L, i.e. $\delta_H([x,f])=[x,\delta_Hf]$ for $x\in L$ and $f\in C^p$.
\end{Prop}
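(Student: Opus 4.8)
The plan is to verify the identity $\delta_H([x,f]) = [x, \delta_H f]$ by direct computation, evaluating both sides on an arbitrary tuple $(a_1,\dots,a_{p+1}) \in A^{p+1}$ and matching terms. The key observation that makes this work is that $\mu(x)$ is a derivation of $A$, so the element $x \in L$ interacts with the associative multiplication $\alpha$ exactly as a derivation: $[x, a_i a_{i+1}] = [x,a_i]\,a_{i+1} + a_i\,[x,a_{i+1}]$ where I write $[x,a] := \mu(x)(a)$ for the action on $A$ and $[x,m] := \tilde\mu(x)(m)$ for the action on $M$. Since $M$ is an $A$-bimodule and $\tilde\mu(x,0)$ restricts to a derivation of $A+M$, one also has the compatibility $[x, a\cdot m] = [x,a]\cdot m + a\cdot[x,m]$ and $[x, m\cdot a] = [x,m]\cdot a + m\cdot[x,a]$ for the left/right actions appearing in the Hochschild differential. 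These are the only structural inputs needed.

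First I would expand $[x, \delta_H f](a_1,\dots,a_{p+1})$ using the definition of the $L$-action on $C^{p+1}(A,M)$: this is $[x, (\delta_H f)(a_1,\dots,a_{p+1})] - \sum_{k=1}^{p+1}(\delta_H f)(a_1,\dots,[x,a_k],\dots,a_{p+1})$. In the first summand I substitute the formula for $\delta_H f$ and push $[x,-]$ through each term, using the derivation property on the boundary terms $a_1 f(\dots)$ and $f(\dots)a_{p+1}$, and linearity on the interior terms. In the second summand I substitute the formula for $\delta_H f$ with $[x,a_k]$ inserted in the $k$-th slot, and I distribute the sum over $k$ against the sum over the terms of $\delta_H$.

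Next I would expand $\delta_H([x,f])(a_1,\dots,a_{p+1})$: here $[x,f] \in C^p(A,M)$ is the cochain $(a_1,\dots,a_p) \mapsto [x,f(a_1,\dots,a_p)] - \sum_i f(\dots,[x,a_i],\dots)$, and I plug this into the Hochschild differential formula. The resulting expression again splits into a "$[x,f(\cdots)]$-part" and a "$-\sum f(\dots,[x,a_i],\dots)$-part." The bookkeeping then is: the $[x,f(\cdots)]$-part of $\delta_H([x,f])$ should match $[x, (\delta_H f)(\cdots)]$ after the derivation property is used to split $[x, a_1 f(\cdots)]$ and $[x, f(\cdots) a_{p+1}]$; and the $-\sum f(\dots,[x,a_i],\dots)$-part, together with the leftover terms from splitting the boundary terms, should match $-\sum_k (\delta_H f)(\dots,[x,a_k],\dots)$. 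The main obstacle — really the only place care is needed — is the term-by-term matching in this second group: when $[x,a_k]$ lands in a slot adjacent to a multiplication $a_{i}a_{i+1}$ in the Hochschild formula, one gets products like $[x,a_i]a_{i+1}$ and $a_i[x,a_{i+1}]$ that must be recognized as precisely the two pieces produced by the derivation property applied to $[x, a_i a_{i+1}]$; the signs $(-1)^i$ and the index shifts caused by contracting $\hat{a_i}$ versus inserting into a product need to be tracked carefully so that everything cancels. The boundary cases $k=1$ and $k=p+1$ (where the inserted $[x,a_k]$ meets the outer $A$-action rather than an internal multiplication) are handled by the $A$-bimodule compatibility of $\tilde\mu(x)$ noted above. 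Once these cancellations are written out the identity follows, and since the analogous statement for the Leibniz side is built into the construction, the proposition is proved.

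Alternatively, and more conceptually, one can phrase the whole argument as: $\tilde\mu(x)$ acts on the Hochschild complex $C^{*}(A+M, A+M)$ as a derivation of the Gerstenhaber structure, hence commutes (up to sign) with the bracket $[\alpha_0, -]$ against the multiplication cochain $\alpha_0$; restricting to the subcomplex $C^{*}(A,M)$ and recalling from Remark \ref{DGLA} that $\delta_H = (-1)^{|\phi|}[\alpha_0, \phi]$ gives the claim. I would mention this reformulation but carry out the direct computation, since the sign conventions make the conceptual version only marginally shorter.
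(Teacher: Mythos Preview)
Your approach is correct. The paper itself does not give a proof of this proposition: it merely states, just before the proposition, that ``it follows that $\delta_H$ is a Leibniz algebra module homomorphism by using the definitions of the map $\delta_H$, and $L$-module action on $C^p$,'' and then records the proposition without a proof environment. So there is nothing to compare against beyond that one-line justification.

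Your direct computation is exactly the content that the paper leaves implicit, and the structural inputs you isolate---that $\mu(x)$ is a derivation of $A$ and that $\tilde\mu(x,0)$ is a derivation of $A+M$, yielding $[x,am]=[x,a]m+a[x,m]$ and $[x,ma]=[x,m]a+m[x,a]$---are precisely what is needed. The term-by-term matching you describe (interior products $a_i a_{i+1}$ splitting via the derivation rule, boundary terms handled by the bimodule compatibility) is the standard verification and goes through without surprises. Your alternative conceptual argument via $\delta_H\phi=(-1)^{|\phi|}[\alpha_0,\phi]$ and the derivation property of $[f_0^x,-]$ on the Gerstenhaber algebra is in fact very much in the spirit of how the paper handles the appendix computations (where it writes $\delta_L(\alpha_n)(x)=[\alpha_n,f_0^x]$ and uses graded Jacobi), so either route is appropriate here.
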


Let $(M,P)$ be a module of the Courant pair $(A,L)$. We now define  a double complex for $(A,L)$ with coefficients in the module $(M,P)$  as follows:

Set $C^{p,q}( A,L )=Hom_{\mathbb{K}}(L^q,C^p(A,M))\cong Hom_{\mathbb{K}}(A^p\otimes L^q,M)$ for all  $(p,q)\in \mathbb{Z}\times \mathbb{Z}$ with $p>0$, q$\geq$0,
  where $C_{0,q}( A,L )=Hom_{\mathbb{K}}(L^q,P)$. For $p>0$, we consider the vertical coboundary map $C^{p,q}( A,L )\rightarrow C^{p+1,q}( A,L )$ is the Hochschild coboundary map $\delta_H$. For $p=0$ and for all $q$, Leibniz algebra module $C^1(A,M)$ the map $\delta_v:C^{0,q}( A,L )\rightarrow C^{1,q}( A,L )$. For $p>0$, vertical maps are Hochschild coboundaries, and $Der(A,M)$ the kernel of $\delta_H:C^1(A,M)\rightarrow C^2(A,M).$ Therefore, the composition of two vertical maps is zero for all $p$ and $q$.
\[
\begin{CD}
....@. .... @. ....@. ....\\
@A\delta_H AA @A\delta_H AA  @A\delta_H AA \\
Hom(A^2,M) @>\delta_L>> Hom(L,C^2(A,M)) @>\delta_L>> Hom(\otimes^2L,C^2(A,M))
@>\delta_L>>  ....
\\
@A\delta_H AA @A\delta_H AA  @A\delta_H AA \\
Hom(A,M) @>\delta_L>> Hom(L,C(A,M)) @>\delta_L>> Hom(\otimes^2L,C^2(A,M))
@>\delta_L >> ....\\
@A\delta_v AA @A\delta_v AA @A\delta_v AA\\
Hom({\mathbb{K}},P) @>\delta_L>> Hom(L,P) @>\delta_L>> Hom(\otimes^2L,P)
@>\delta_L>> ....\\
\end{CD}
\]
In the horizontal direction, we have for all $p$ and $q$ the Leibniz algebra coboundary $\delta_L: C^{p,q}( A,L )\longrightarrow C^{p,q+1}( A,L ).$ Since all the vertical coboundaries are Lie module homomorphisms, it follows that they commute with all the horizontal ones. Therefore, we can  define a total cochain complex as follows: For $n\geq 0$.
 $$C^n_{tot}( A,L )=\bigoplus _{p+q=n}C^{p,q}( A,L )~\mbox{and}~\delta_{tot}:C^n_{tot}( A,L )\rightarrow C^{n+1}_{tot}( A,L ),$$ whose restriction to $C^{p,q}( A,L )$ by definition is $\delta_H+(-1)^p\delta_L$. The resulting cohomology of the total complex we define to be the cohomology of the given Courant pair $(A,L)$ with coefficients in the module $(M,P)$, and we denoted this cohomology as $HL^* (A,L; M,P)$.

In particular for $(M,P)= (A,L)$, with the obvious actions the cohomology is denoted by $HL^* (A,L; A,L)$. We will recall this cohomology in the next section for the deformation of Courant pairs. 

\begin{Rem}
The cohomology spaces are different if one consider a Leibniz pair as an object in the larger category of Courant pairs. Recall from \cite{LeibP},
 $H^*_{LP}(A,L; M,P)$ is the cohomology space for Leibniz pair $(A,L)$ with coefficients in the module $(M,P)$. 
 Consider the Leibniz pair $(\mathbb{R},\chi(\mathbb{R}^n))$  with structure map $\mu$ as zero map and $(\mathbb{R},\mathbb{R})$ as a module over this Leibniz pair.
It follows that $H^*_{LP}(\mathbb{R},\chi(\mathbb{R}^n);\mathbb{R},\mathbb{R}) \cong H^*(\chi(\mathbb{R}^n),\mathbb{R})$.

On the other hand  if we take $(\mathbb{R},\chi(\mathbb{R}^n))$ as a Courant pair and $(\mathbb{R},\mathbb{R})$ as a module over it, then  $HL^*(\mathbb{R},\chi(\mathbb{R}^n);\mathbb{R},\mathbb{R})\cong HL^*(\chi(\mathbb{R}^n),\mathbb{R})$.

We know that the Leibniz algebra cohomology space $HL^*(\chi(\mathbb{R}^n),\mathbb{R})$ and the Lie algebra cohomology space $H^*(\chi(\mathbb{R}^n),\mathbb{R})$  are different as there are new generators for the dual Leibniz algebra structure over the Leibniz cohomology space (see \cite{LeibV}, \cite{Lod}).

This shows that, while considering a Leibniz pair $(\mathbb{R},\chi(\mathbb{R}^n))$ as an object in the category of Courant pairs, we have a somewhat different cohomology space.
\end{Rem}

\section{Deformation of Courant pairs:}

In this section, we study formal deformation of Courant pairs. All the basic notions of deformation theory of algebraic structures are originally due to M. Gerstenhaber(\cite{G1}, \cite{G2}, \cite{G3}, \cite{G4} ).  Here we briefly describe the analogous concepts related to deformation of a Courant pair. This can be viewed as a generalization of the deformation of Leibniz pairs (in \cite{LeibP}) to a larger category by allowing more deformations.

Let $(A,L)$ be a Courant pair, then using remark (3.2), it can be viewed as a triplet $(\alpha,\mu,\lambda)$. Recall that  a formal $1$-parameter family of deformations of an associative algebra $A$ is an associative algebra bracket $\alpha_t$ on the $\mathbb{K}[[t]]$-module $A_t=A{\otimes}_{\mathbb K} \mathbb{K}[[t]]$, where $\alpha_t=\sum_{i\geq 0}\alpha_i t^i,~\alpha_i \in C^2(A;A)$ with $\alpha_0$ being the algebra bracket on $A$. Analogously,  a formal $1$-parameter family of deformations of  a Leibniz algebra $L$ is a Leibniz bracket $\lambda_t$ on the $\mathbb{K}[[t]]$-module $L_t=L{\otimes}_{\mathbb K} \mathbb{K}[[t]]$, where $\lambda_t=\sum_{i\geq 0}\lambda_i t^i,~\lambda_i \in CL^2(L;L)$ with $\lambda_0$ denotes the original Leibniz bracket on $L$.

\begin{Def}
 A deformation  of a Courant pair $(A,L)$ whose structure maps are given by the triplet $(\alpha,\mu,\lambda)$ is defined as a tuple of the form $(\alpha_t,\mu_t,\lambda_t)$, where $\alpha_t$ is a deformation of $\alpha$, i.e., an associative $\mathbb{K}[[t]]$-bilinear multiplication $A[[t]]\times A[[t]]\rightarrow A[[t]]$ such that $\alpha_t=\sum_{i\geq0}\alpha_it^i$, where $\alpha_0=\alpha$, $\lambda_t$ is a deformation of Leibniz algebra bracket $\lambda$, and $\mu_t=\sum_{i\geq0}\mu_it^i$, where $\mu_0=\mu$. Also following compatibility conditions among $\alpha_t$, $\mu_t$ and $\lambda_t$ are satisfied:
\end{Def}

\begin{equation}\label{D1}
\mu_t(x,\alpha_t(a,b))=\alpha_t(a,\mu_t(x,b))+\alpha_t(\mu_t(x,a),b),
\end{equation}
\begin{equation}\label{D2}
\mu_t(\lambda_t(x,y),a)=\mu_t(x,\mu_t(y,a))-\mu_t(y,\mu_t(x,a)).
\end{equation}
By above definition we have also the following equations, using the fact that $\alpha_t$ and $\lambda_t$ are deformations of the associative multiplication $\alpha$ on $A$ and the Leibniz bracket $\lambda$ on $L$, respectively:
\begin{equation}\label{D3}
\alpha_t(a,\alpha_t(b,c))=\alpha_t(\alpha_t(a,b),c),
\end{equation}
\begin{equation}\label{D4}
\lambda_t(x,\lambda_t(y,z))=\lambda_t(\lambda_t(x,y),z)+\lambda_t(y,\lambda_t(x,z)).
\end{equation}
Now expanding both sides of the above equations and collecting coefficients of $t^n$, we get the following equations: For $a,b,c \in A$ and $x,y,z \in L$
\begin{align}
 \sum_{i+j=n}\mu_i(x,\alpha_j(a,b))&=\sum_{i+j=n}{\alpha_i(a,\mu_j(x,b))+\alpha_i(\mu_j(x,a),b)};\label{D5}\\
%~~~ ~\mbox{for}~ x\in L,~ a,b \in A;\\
   \sum_{i+j=n}\mu_i(\lambda_j(x,y),a)&=\sum_{i+j=n}{\mu_i(x,\mu_j(y,a))-\mu_i(y,\mu_j(x,a))};\label{D6}\\
%~~~~ \mbox{for}~ x,y\in L,~ a\in A;\\
   \sum_{i+j=n}\lambda_i(x,\lambda_j(y,z))&=\sum_{i+j=n}{\lambda_i(\lambda_j(x,y),z)+\lambda_i(y,\lambda_j(x,z))};\label{D7}\\
% ~~~~\mbox{for}~ x,y,z\in L;\\
   \sum_{i+j=n}\alpha_i(a,\alpha_j(b,c))&=\sum_{i+j=n}\alpha_i(\alpha_j(a,b),c)\label{D8}.
%~~~~~~ \mbox{for} ~a,b,c\in A.
\end{align}
\begin{Rem} For $n=0$, equations \eqref{D1} and \eqref{D2} imply that $\mu$ is a Leibniz algebra morphism from $L$ into $Der(A)$, and \eqref{D4}, \eqref{D3} implies the usual Leibniz identity for $\lambda_0$ and the associativity of the multiplication $\alpha_0$.
 Now,   the map $\delta_L: Hom(L^q,C^p(A,A))\rightarrow Hom(L^{q+1},C^p(A,A))$ is defined as:
\begin{align*}
  \delta_Lf(x_1,...,x_{q+1})=&(-1)^{q+1}\sum_{i=1}^q (-1)^{i+1}[x_i,f(x_1,..,\hat{x_i},..,x_{q+1})]+[f(x_1,...,x_q),x_{q+1}]\\
                             &+(-1)^{q+1}\sum_{1\leq i< j \leq q+1}f(x_1,..,\hat{x_i},..,x_{j-1},[x_i,x_j],x_{j+1},..,x_{q+1}).
\end{align*}
So, for $n=1$, equations \eqref{D8} and \eqref{D7} imply that $\delta_H(\alpha_1)=0,~~ \delta_L(\lambda_1)=0$ and equations \eqref{D5} and \eqref{D6} imply that $\delta_H\mu_1+\delta_L\alpha_1=0$ and
$\delta_v\lambda_1-\delta_L\mu_1=0$. Thus, for $n=1$, equations \eqref{D5}-\eqref{D8} give that 
 $\delta_{tot}(\alpha_1,\mu_1,\lambda_1)=0$. So $(\alpha_1,\mu_1,\lambda_1)$ is a $2$-cocycle in the double complex described in the previous section. 
\end{Rem}

\begin{Def} The $2$-cochain $(\alpha_1,\mu_1,\lambda_1)$ is called the infinitesimal of the deformation $(\alpha_t,\mu_t,\lambda_t)$. More generally, if $(\alpha_i,\mu_i,\lambda_i)=0$ for $1\leq i\leq(n-1)$, and $(\alpha_n,\mu_n,\lambda_n)$ is a non zero cochain in $C^2_{tot}$, then $(\alpha_n,\mu_n,\lambda_n)$ is called the $n$-infinitesimal of the deformation $(\alpha_t,\mu_t,\lambda_t)$.
\end{Def}

\begin{Prop} The infinitesimal $(\alpha_1,\mu_1,\lambda_1)$ of the deformation $(\alpha_t,\mu_t,\lambda_t)$ is a $2$-cocycle in $C^2_{tot}$. More generally, the $n$-infinitesimal is a $2$-cocycle.
\end{Prop}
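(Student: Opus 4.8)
The plan is to reduce the general statement to the order-$t$ computation already recorded in the remark preceding the proposition. First I would fix the hypothesis for the $n$-infinitesimal: $(\alpha_i,\mu_i,\lambda_i)=0$ for $1\le i\le n-1$, while $(\alpha_n,\mu_n,\lambda_n)$ is the first possibly nonzero term. Then I would extract the coefficient of $t^n$ from the deformation and compatibility identities \eqref{D1}--\eqref{D4}, which are exactly the relations \eqref{D5}--\eqref{D8}. In each sum $\sum_{i+j=n}$ occurring there, every summand with both indices strictly between $0$ and $n$ is killed by the induction hypothesis, so only the two extreme summands, $i=0$ and $j=0$, survive. Consequently each order-$t^n$ identity takes the same shape as the corresponding order-$t$ identity, with $\alpha_n,\mu_n,\lambda_n$ replacing $\alpha_1,\mu_1,\lambda_1$.

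Next I would identify these four surviving relations with the vanishing of the four graded components of $\delta_{tot}(\alpha_n,\mu_n,\lambda_n)$. Viewing the $2$-cochain $(\alpha_n,\mu_n,\lambda_n)$ as a member of $C^{2,0}\oplus C^{1,1}\oplus C^{0,2}$ and applying $\delta_{tot}$, whose restriction to $C^{p,q}$ is $\delta_H+(-1)^p\delta_L$, the image lies in $C^{3,0}\oplus C^{2,1}\oplus C^{1,2}\oplus C^{0,3}$ with components $\delta_H\alpha_n$ in $C^{3,0}$, $\ \delta_L\alpha_n+\delta_H\mu_n$ in $C^{2,1}$, $\ \delta_v\lambda_n-\delta_L\mu_n$ in $C^{1,2}$, and $\delta_L\lambda_n$ in $C^{0,3}$. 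Matching against the reduced equations: \eqref{D8} at order $t^n$ says $\delta_H\alpha_n=0$, \eqref{D7} says $\delta_L\lambda_n=0$, \eqref{D5} rearranges to $\delta_H\mu_n+\delta_L\alpha_n=0$, and \eqref{D6} rearranges to $\delta_v\lambda_n-\delta_L\mu_n=0$. Hence $\delta_{tot}(\alpha_n,\mu_n,\lambda_n)=0$, i.e.\ it is a $2$-cocycle in $C^2_{tot}$; the case $n=1$ is the special case with no induction hypothesis needed, already handled in the remark.

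The parts I would not grind through are the sign manipulations that turn \eqref{D5} and \eqref{D6} into the two mixed cocycle relations; the purely associative relation $\delta_H\alpha_n=0$ is Gerstenhaber's observation (\cite{G1}) and the purely Leibniz relation $\delta_L\lambda_n=0$ is Balavoine's (\cite{B}), both already invoked in the introduction, so only the two mixed equations need explicit checking, and that is the same short rearrangement used for $n=1$. The main obstacle is thus only bookkeeping: ensuring that the signs in $\delta_H$, in the version of $\delta_L$ on $Hom(L^q,C^p(A,A))$ spelled out in the remark, in $\delta_v$, and in the twist $(-1)^p$ defining $\delta_{tot}$ all conspire so that the order-$t^n$ identities emerge as $\delta_{tot}(\alpha_n,\mu_n,\lambda_n)=0$ on the nose. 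Once this sign check is carried out at $n=1$, the general case needs no new idea beyond the observation that the intermediate summands vanish by the induction hypothesis.
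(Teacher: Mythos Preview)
Your proposal is correct and matches the paper's approach: the paper does not supply a separate proof of the proposition, relying instead on the computation in the preceding remark for $n=1$ and leaving the general $n$-infinitesimal implicit. Your observation that the hypothesis $(\alpha_i,\mu_i,\lambda_i)=0$ for $1\le i\le n-1$ kills the intermediate summands in \eqref{D5}--\eqref{D8}, thereby reducing the order-$t^n$ identities to the already-verified order-$t$ shape, is exactly the intended argument and in fact supplies more detail than the paper itself.
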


% $d(\alpha_1,\mu_1,\lambda_1)$ = $(\delta_H\alpha_1,\delta_L\alpha_1+\delta_H\mu_1,-\delta_L+\delta_v\lambda_1,\delta_L\lambda_1) = 0.$
 
Let $(\alpha_t,\mu_t,\lambda_t)$ and $(\tilde{\alpha}_t,\tilde{\mu}_t,\tilde{\lambda}_t)$ be two deformations of Courant pair.

\begin{Def} We say that $(\alpha_t,\mu_t,\lambda_t)$ is equivalent to $(\tilde{\alpha}_t,\tilde{\mu}_t,\tilde{\lambda}_t)$, where $\tilde{\alpha}_t=\sum_{i\geq0}t^i\tilde{\alpha}_i$ and $\tilde{\alpha}_0=\alpha$, etc., if there exists $\mathbb{K}[[t]]$-linear maps $\Phi_t=id_A+\sum_{i\geq1}t^i \phi_i : A[[t]] \rightarrow A[[t]]$ (with each $\phi_i$ a linear map $A \rightarrow A$ extended to be $\mathbb{K}[[t]]$-linear map) and $\Psi_t=id_L+\sum_{i\geq1}t^i\psi_i:L[[t]] \rightarrow L[[t]]$ such that
\begin{align}
 \tilde{\alpha}_t(a,b)&=\Phi_t^{-1}\alpha_t(\Phi_ta,\Phi_tb),\label{D9}\\
 \tilde{\mu}_t(x,a)&=\Phi_t^{-1}\mu_t(\Psi_tx,\Phi_ta),\label{D10}\\
 \tilde{\lambda}_t(x,y)&=\Psi_t^{-1}\lambda_t(\Psi_tx,\Psi_ty).\label{D11}
\end{align}
\end{Def}

\begin{Def}
Any deformation of Courant pair equivalent to the deformation $(\alpha,\mu,\lambda)$ is said to be a trivial deformation.
\end{Def}

\begin{Thm} The cohomology class of the infinitesimal of the deformation $(\alpha_t,\mu_t,\lambda_t)$ of Courant pair is determined by the equivalence class of $(\alpha_t,\mu_t,\lambda_t)$.
\end{Thm}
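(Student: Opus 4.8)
The plan is to show that if two deformations $(\alpha_t,\mu_t,\lambda_t)$ and $(\tilde\alpha_t,\tilde\mu_t,\tilde\lambda_t)$ are equivalent via $(\Phi_t,\Psi_t)$ as in \eqref{D9}--\eqref{D11}, then their infinitesimals $(\alpha_1,\mu_1,\lambda_1)$ and $(\tilde\alpha_1,\tilde\mu_1,\tilde\lambda_1)$ differ by a total coboundary $\delta_{tot}$ of the $1$-cochain naturally built from $(\phi_1,\psi_1)$. First I would expand each of the three equivalence relations to first order in $t$. From \eqref{D9}, comparing the coefficients of $t^1$ and using $\Phi_t^{-1}=id_A-t\phi_1+O(t^2)$, one gets the classical Gerstenhaber identity $\tilde\alpha_1 = \alpha_1 + \delta_H(\phi_1)$, where $\phi_1\in C^1(A,A)=Hom(A,A)$ and $\delta_H$ is the Hochschild coboundary. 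Similarly, from \eqref{D11} one obtains $\tilde\lambda_1 = \lambda_1 + \delta_L(\psi_1)$ with $\psi_1\in CL^1(L;L)$; this is Balavoine's computation for Leibniz algebras. The new ingredient is \eqref{D10}: expanding $\tilde\mu_t(x,a)=\Phi_t^{-1}\mu_t(\Psi_t x,\Phi_t a)$ to order $t^1$ gives
\[
\tilde\mu_1(x,a) = \mu_1(x,a) + \mu_0(\psi_1 x, a) + \mu_0(x,\phi_1 a) - \phi_1\big(\mu_0(x,a)\big).
\]
The task is then to recognize the right-hand side minus $\mu_1$ as exactly the mixed component of $\delta_{tot}$ applied to the $1$-cochain $(\phi_1,\psi_1)$ sitting in $C^1_{tot}(A,L)=C^{1,0}(A,L)\oplus C^{0,1}(A,L)=Hom(A,A)\oplus Hom(L,L)$.

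The key step is therefore to unwind what $\delta_{tot}$ does on a $1$-cochain. Writing a general element of $C^1_{tot}(A,L)$ as $(g,h)$ with $g\in Hom(A,A)$ (bidegree $(1,0)$) and $h\in Hom(L,L)$ (bidegree $(0,1)$), the definition $\delta_{tot}|_{C^{p,q}}=\delta_H+(-1)^p\delta_L$ produces a $2$-cocycle-valued image whose three components are: the $(2,0)$-component $\delta_H g \in Hom(A^{\otimes 2},A)$, the $(0,2)$-component $\pm\delta_L h\in Hom(L^{\otimes 2},L)$, and the $(1,1)$-component which combines $\delta_L g$ (the Leibniz coboundary applied to the $L$-module element $g\in C^1(A,A)$, using the $L$-action $[x,g]$ described just before Proposition~4.2) with $\delta_v h$ or $\delta_H h$ coming from $h$. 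Matching this $(1,1)$-component against the displayed expression for $\tilde\mu_1-\mu_1$ is the crux: one has to check that $\mu_0(\psi_1 x,a)+\mu_0(x,\phi_1 a)-\phi_1(\mu_0(x,a))$ is precisely $(\delta_L\phi_1 - \delta_v\psi_1)(x)(a)$ (up to the sign convention $(-1)^p$), i.e. that the $L$-module action on $C^1(A,M)$ and the map $\phi\colon P\to Der(A,M)$ from Note~\ref{phi} are calibrated so the mixed differential has exactly this form — this is the same structure already used in Remark~5.5 to show $\delta_{tot}(\alpha_1,\mu_1,\lambda_1)=0$ for an honest deformation.

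Once the three identities
\[
\tilde\alpha_1-\alpha_1=\delta_H\phi_1,\qquad \tilde\lambda_1-\lambda_1=(-1)\,\delta_L\psi_1,\qquad \tilde\mu_1-\mu_1=\delta_L\phi_1-\delta_v\psi_1
\]
(with the paper's sign conventions) are in hand, they assemble to $(\tilde\alpha_1,\tilde\mu_1,\tilde\lambda_1)-(\alpha_1,\mu_1,\lambda_1)=\delta_{tot}(\phi_1,\psi_1)$, so the two infinitesimals represent the same class in $H^2_{tot}(A,L;A,L)=HL^2(A,L;A,L)$. Conversely, the cohomology class depends only on the equivalence class because any relabelling of the deformation by $(\Phi_t,\Psi_t)$ changes the infinitesimal only by such a coboundary; this establishes the theorem. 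I expect the main obstacle to be purely bookkeeping: getting every sign in the mixed $(1,1)$-term to line up with the convention $\delta_{tot}|_{C^{p,q}}=\delta_H+(-1)^p\delta_L$ and with the module-action signs $[f,x]=-[x,f]$, rather than any conceptual difficulty — the first-order expansions of \eqref{D9}--\eqref{D11} are routine, and the associative and Leibniz pieces are already in the literature (\cite{G1}, \cite{B }).
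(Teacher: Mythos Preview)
Your proposal is correct and follows essentially the same route as the paper: expand the equivalence relations \eqref{D9}--\eqref{D11} to first order in $t$, read off the three componentwise identities, and assemble them into $(\tilde\alpha_1,\tilde\mu_1,\tilde\lambda_1)-(\alpha_1,\mu_1,\lambda_1)=\delta_{tot}(\phi_1,\psi_1)$. The paper's proof is the same computation stated in four lines (with $\tilde\alpha_1-\alpha_1=\delta_H\phi_1$, $\tilde\lambda_1-\lambda_1=\delta_L\psi_1$, $\tilde\mu_1-\mu_1=\delta_v\psi_1+\delta_L\phi_1$); your only discrepancies are the sign choices in the $\lambda$ and $\mu$ components, which, as you already anticipated, are pure bookkeeping against the convention $\delta_{tot}|_{C^{p,q}}=\delta_H+(-1)^p\delta_L$.
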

\begin{proof}
Let $(\Phi_t,\Psi_t)$ : $(\alpha_t,\mu_t,\lambda_t) \rightarrow (\tilde{\alpha}_t,\tilde{\mu}_t,\tilde{\lambda}_t)$ be an equivalence of these two deformations. Then expanding and comparing coefficients of t in the equations \eqref{D9}, \eqref{D10}, \eqref{D11}, we get $\tilde{\alpha}_1-\alpha_1=\delta_H\phi_1$, $\tilde{\lambda}_1-\lambda_1=\delta_L\psi_1,$ and $\tilde{\mu}_1-\mu_1=\delta_v\psi_1+\delta_L\phi_1.$ So, it follows that $\delta_{tot}(\phi_1,\psi_1)=(\alpha_1,\mu_1,\lambda_1)-(\tilde{\alpha}_1,\tilde{\mu}_1,\tilde{\lambda}_1)$.
\end{proof}

\begin{Def}
A Courant pair is said to be rigid if every deformation of the Courant pair is trivial.
\end{Def}

\begin{Thm}
 A non trivial deformation of a Courant pair is equivalent to a deformation whose $n$-infinitesimal is not a coboundary for some $n\geq1$.
\end{Thm}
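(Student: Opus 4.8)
The plan is to follow the classical Gerstenhaber argument adapted to the total complex of the Courant pair. Suppose $(\alpha_t,\mu_t,\lambda_t)$ is a non trivial deformation of $(A,L)$. If its infinitesimal $(\alpha_1,\mu_1,\lambda_1)$ is already not a coboundary in $C^2_{tot}$, we are done. Otherwise, the first nonzero $n$ for which $(\alpha_n,\mu_n,\lambda_n)$ is a coboundary (in particular the case $n=1$) is the one to attack: write $(\alpha_n,\mu_n,\lambda_n)=\delta_{tot}(\phi_n,\psi_n)$ for some $(\phi_n,\psi_n)\in C^1_{tot}(A,L)$, i.e. a pair of linear maps $\phi_n:A\to A$ and $\psi_n:L\to L$. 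The idea is to build the equivalence $(\Phi_t,\Psi_t)$ with $\Phi_t=\mathrm{id}_A-t^n\phi_n$ and $\Psi_t=\mathrm{id}_L-t^n\psi_n$ (more precisely $\Phi_t=\mathrm{id}_A-t^n\phi_n+\cdots$, adding higher order terms only if needed), and to check that the transformed deformation $(\tilde\alpha_t,\tilde\mu_t,\tilde\lambda_t)$ defined by \eqref{D9}, \eqref{D10}, \eqref{D11} has vanishing $i$-infinitesimals for $1\le i\le n$, hence is equivalent to a deformation whose lowest nonzero cochain sits in degree $\ge n+1$.

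The key computation is the order-$t^n$ part of the three equivalence relations. From \eqref{D9}, since $\Phi_t=\mathrm{id}_A-t^n\phi_n$ and $\Phi_t^{-1}=\mathrm{id}_A+t^n\phi_n+O(t^{n+1})$, one gets $\tilde\alpha_n=\alpha_n-\delta_H\phi_n$ (all lower $\tilde\alpha_i$ with $1\le i<n$ are unchanged and hence still zero, and $\tilde\alpha_0=\alpha$). Similarly \eqref{D11} gives $\tilde\lambda_n=\lambda_n-\delta_L\psi_n$, and \eqref{D10} gives $\tilde\mu_n=\mu_n-(\delta_v\psi_n+\delta_L\phi_n)$; these are exactly the three components of $(\alpha_n,\mu_n,\lambda_n)-\delta_{tot}(\phi_n,\psi_n)$, which is zero by the choice of $(\phi_n,\psi_n)$. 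Thus $(\tilde\alpha_n,\tilde\mu_n,\tilde\lambda_n)=0$, so the new deformation has its first nonzero infinitesimal in some degree $m>n$; moreover it is still non trivial, being equivalent to a non trivial deformation. Iterating this procedure, either at some stage the $m$-infinitesimal fails to be a coboundary (and we stop), or we can kill every infinitesimal. The latter is impossible: a deformation all of whose infinitesimals vanish is equivalent to $(\alpha,\mu,\lambda)$ itself, contradicting non triviality. (To be careful about the infinite iteration one should note, as in \cite{G2}, that the successive equivalences $\Phi_t^{(k)},\Psi_t^{(k)}$ differ from the identity only in orders $\ge n_k$ with $n_k$ strictly increasing, so their composite converges in the $t$-adic topology to a well defined $\mathbb{K}[[t]]$-linear automorphism.)

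The main obstacle is making the iteration argument genuinely rigorous rather than merely "keep going": one must verify that the infinite composition of the normalizing transformations converges $t$-adically and that the limiting transformation carries the original deformation to one with no nonzero infinitesimal, so that the alternative "every infinitesimal can be killed" really does force triviality. The degree-by-degree bookkeeping for $\tilde\mu_n$ is also slightly delicate because \eqref{D10} mixes $\Phi_t$ and $\Psi_t$; one should expand $\Phi_t^{-1}\mu_t(\Psi_t x,\Phi_t a)$ carefully and use $\mu_0=\mu$ together with the fact that $\mu$ is a Leibniz morphism into $Der(A)$ to see that the cross terms assemble precisely into $\delta_v\psi_n+\delta_L\phi_n$, matching Theorem 5.5's order-one identity at general order $n$. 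Everything else is the routine Gerstenhaber normalization, transplanted to the bicomplex $C^{*,*}(A,L)$ via the differential $\delta_{tot}=\delta_H+(-1)^p\delta_L$.
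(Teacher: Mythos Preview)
Your argument is correct and follows essentially the same route as the paper: assume the $n$-infinitesimal is a coboundary $\delta_{tot}(\phi,\psi)$, set $\Phi_t=\mathrm{id}_A\pm t^n\phi$, $\Psi_t=\mathrm{id}_L\pm t^n\psi$, check that the transported deformation has vanishing $i$-infinitesimals for $1\le i\le n$, and iterate. Your remarks on $t$-adic convergence of the successive normalizations are in fact more careful than the paper's (which simply says ``we can repeat the argument''); just be attentive to the sign in the $(-1)^p\delta_L$ contribution when identifying $\tilde\mu_n$ with the $C^{1,1}$-component of $\delta_{tot}(\phi_n,\psi_n)$, since with the paper's convention that component is $-\delta_L\phi+\delta_v\psi$ rather than $\delta_L\phi+\delta_v\psi$.
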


\begin{proof}
Let $(\alpha_t,\mu_t,\lambda_t)$ be a deformation of a Courant pair $(\alpha,\mu,\lambda)$ with $n$-infinitesimal $(\alpha_n,\mu_n,\lambda_n)$, for some $n\geq1$. Assume that there exists a $1$-cochain $(\phi,\psi)\in C^1_{tot}$ whose coboundary is the $n$-infinitesimal,
$$i.e., \delta_{tot}(\phi,\psi)=(\alpha_n,\mu_n,\lambda_n).$$
This gives
\begin{equation}\label{D12}
\alpha_n=\delta_H\phi,~\lambda_n=\delta_L\psi~\mbox{and}~\mu_n=-\delta_L\phi+\delta_v\psi.
\end{equation}
Take  $$\Phi_t=id_A+\phi t^n~~\mbox{and}~~\Psi_t=id_L+\psi t^n.$$ 
We define a deformation  $(\tilde{\alpha}_t,\tilde{\mu}_t,\tilde{\lambda}_t)$ of the Courant pair $((\alpha,\mu,\lambda)) $, where
$$\tilde{\alpha}_t=\Phi_t \circ \alpha_t \circ \Phi_t^{-1},~~ \tilde{\mu}_t(\Psi_t(x),~\Phi_t (a) )=\Phi_t \circ \mu_t (x,a),~~ \tilde{\lambda}_t = \Psi_t \circ \lambda_t  \circ \Psi_t^{-1}.$$
So, we have following equations:
\begin{align}
\tilde{\alpha_t}(\Phi_ta,\Phi_tb)&=\Phi_t\alpha_t(a,b),  \label{D13}\\
\tilde{\mu_t}(\Psi_tx,\Phi_ta)&=\Phi_t\mu_t(x,a),\label{D14}\\
\tilde{\lambda_t}(\Psi_tx,\Psi_ty)&=\Psi_t\lambda_t(x,y) .\label{D15}
\end{align}
By using above equations \eqref{D12}-\eqref{D15}, we have $$\tilde{\alpha}_n=\alpha_n-\delta_H\phi=0,~~\tilde{\lambda}_n=\lambda_n-\delta_L\psi=0,~~\tilde{\mu}_n=\mu_n+\delta_L\phi-\delta_v\psi=0.$$
and,$$\tilde{\alpha}_i=\tilde{\lambda}_i=\tilde{\mu}_i=0~~~\mbox{for}~1\leq i\leq n-1. $$
So, the given deformation $(\alpha_t,\mu_t,\lambda_t)$ is equivalent to a deformation $(\tilde{\alpha}_t,\tilde{\mu}_t,\tilde{\lambda}_t)$ for which $(\tilde{\alpha}_i,\tilde{\mu}_i,\tilde{\lambda}_i)=0,~\mbox{for}~1\leq i\leq n.$ Hence, we can repeat the argument to kill off any infinitesimal that is a coboundary. So the process must stop if the deformation is non trivial.
\end{proof}
\begin{Cor}
If $HL^2(A,L;A,L)=0$, then the Courant pair $(\alpha,\mu,\lambda) $ is rigid.\qed

\end{Cor}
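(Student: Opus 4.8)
The plan is to argue by contradiction using the two preceding results. Suppose the Courant pair $(\alpha,\mu,\lambda)$ admits a non-trivial deformation $(\alpha_t,\mu_t,\lambda_t)$. By the previous theorem, this deformation is equivalent to a deformation $(\tilde{\alpha}_t,\tilde{\mu}_t,\tilde{\lambda}_t)$ whose $n$-infinitesimal $(\tilde{\alpha}_n,\tilde{\mu}_n,\tilde{\lambda}_n)$ is \emph{not} a coboundary in $C^{*}_{tot}$, for some $n\geq 1$. On the other hand, by the proposition on infinitesimals, the $n$-infinitesimal of any deformation is a $2$-cocycle in $C^2_{tot}$, so it defines a class in $HL^2(A,L;A,L)$. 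Since $HL^2(A,L;A,L)=0$, every $2$-cocycle is a coboundary; in particular $(\tilde{\alpha}_n,\tilde{\mu}_n,\tilde{\lambda}_n)=\delta_{tot}(\phi,\psi)$ for some $1$-cochain $(\phi,\psi)\in C^1_{tot}$. This contradicts the choice of $(\tilde{\alpha}_t,\tilde{\mu}_t,\tilde{\lambda}_t)$, and we conclude that no non-trivial deformation exists, i.e. the Courant pair is rigid.

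First I would invoke the proposition stating that the $n$-infinitesimal is a $2$-cocycle; this is the input that lets us talk about cohomology classes at all. Then I would apply the theorem that a non-trivial deformation is equivalent to one whose $n$-infinitesimal is not a coboundary — this is the structural "normal form" result. The hypothesis $HL^2=0$ then immediately collapses the distinction between cocycle and coboundary, yielding the contradiction. One should also note the degenerate case: if the deformation $(\alpha_t,\mu_t,\lambda_t)$ has no non-trivial $n$-infinitesimal for any $n$ (all $(\alpha_i,\mu_i,\lambda_i)=0$ for $i\geq 1$), then it already equals $(\alpha,\mu,\lambda)$ up to convergence and is trivial, so the argument above covers all cases.

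I do not expect any real obstacle here: the corollary is a formal consequence of the two results stated just above it, exactly parallel to the classical Gerstenhaber rigidity criterion $H^2=0 \Rightarrow$ rigid. The only point requiring minor care is the logical bookkeeping — making sure the "for some $n\geq 1$" quantifier from the equivalence theorem lines up with the cocycle property holding for that same $n$, and that the equivalence of deformations preserves triviality (so that rigidity of the normalized deformation transfers back to the original one). These are immediate from the definitions of equivalence and trivial deformation given above, so the proof is short.
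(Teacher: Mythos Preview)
Your argument is correct and is exactly the intended one: the paper marks the corollary with \qed\ and gives no further proof, treating it as an immediate consequence of Proposition~5.4 and Theorem~5.9, which is precisely the contradiction argument you spell out. There is nothing to add.
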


\section{Obstruction Cocycles:}
 In this section, we discuss about the problem of realising a $2$-cocycle in $C_{tot}^2$ as the infinitesimal of a deformation of the Courant pair. To this end first we define an obstruction cochain and then proceed by detecting any obstructions to extend a given deformation modulo $t^n$ to a deformation modulo $t^{n+1},~n\geq1.$ Let $N$ be a positive integer.

\begin{Def}
A deformation of Courant pair of order $N$ is a $(\alpha_t,\mu_t,\lambda_t)$ such that $\alpha_t=\sum_{i=0}^N \alpha_i t^i$ and $\lambda_t=\sum_{i=0}^N \lambda_i t^i$ are deformations modulo $t^{n+1}$ of $A$ and $L$ respectively, that is $\alpha_t$ and $\lambda_t $ satisfy \eqref{D8}, \eqref{D7} respectively for $0\leq i\leq N,$ and $\mu_t=\sum_{i=0}^N \mu_i t^i$ satisfies equations \eqref{D5} and \eqref{D6}\\
$~~~~~$If there exists a $2$-cochain $(\alpha_{N+1},\mu_{N+1},\lambda_{N+1})\in C^2_{tot}$, such that the triple $(\tilde{\alpha_t},\tilde{\mu_t},\tilde{\lambda_t})$ with $\tilde{\mu_t}=\mu_t+\mu_{N+1}t^{N+1}$, $\tilde{\alpha_t}=\alpha_t+\alpha_{N+1}t^{N+1}$, and$\tilde{\lambda_t}=\lambda_t+\lambda_{N+1}t^{N+1}$ is a deformation of Courant pair of order $(N+1)$, then we say that $(\alpha_t,\mu_t,\lambda_t)$ extends to a deformation of Courant pair of order $(N+1)$.
\end{Def}
\begin{Def}\label{obstruction cochains}
 Let $(\alpha_t,\mu_t,\lambda_t)$ be a deformation of Courant pair of order $N$. Consider the cochains $\Theta(A),~\Theta^1~,\Theta^2,~\Theta(L)$, where 
\begin{align*}
\Theta(A)(a,b,c)&=\sum_{i+j=N+1;~ i,j>0}{\alpha_i(a,\alpha_j(b,c))-\alpha_i(\alpha_j(a,b),c)};\\
\Theta^1(x,a,b)&= \sum_{i+j=N+1;~ i,j>0}{\mu_i(x,\alpha_j(a,b))-\alpha_j(a,\mu_i(x,b))-\alpha_j(\mu_i(x,a),b)};\\
\Theta^2(x,y,a)&= \sum_{i+j=N+1;~ i,j>0}-{\mu_i(\lambda_j(x,y),a)+\mu_i(x,\mu_j(y,a))-\mu_i(y,\mu_j(x,a))};\\
\Theta(L)(x,y,z)&= \sum_{i+j=N+1;~ i,j>0}-{\lambda_i(x,\lambda_j(y,z))+\lambda_i(y,\lambda_j(x,z))+\lambda_i(\lambda_j(x,y),z)}.
\end{align*}
The $3$-cochain $\Theta(\alpha_t,\mu_t;\lambda_t)=(\Theta(A),~\Theta^1~,\Theta^2,~\Theta(L))\in C_{tot}^3$ is called the obstruction cochain for extending the deformation of Courant pair $(\alpha_t,\mu_t,\lambda_t)$ of order $N$ to a deformation of order $N+1$.
\end{Def}

\begin{Thm}\label{3cocycle}
The obstruction cochain $\Theta(\alpha_t,\mu_t,\lambda_t) = (\Theta(A),~\Theta^1~,\Theta^2,~\Theta(L))$ of a deformation of order $N$ is a $3$-cocycle in $C_{tot}^3$.
\end{Thm}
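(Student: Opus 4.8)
The plan is to show that the total coboundary of the obstruction cochain vanishes by treating the four components $\Theta(A)$, $\Theta^1$, $\Theta^2$, $\Theta(L)$ separately and then checking the mixed terms that the total differential $\delta_{tot} = \delta_H + (-1)^p\delta_L$ (together with the vertical map $\delta_v$ on the bottom row) couples together. The two "pure" pieces are already in the literature: by Gerstenhaber (\cite{G1}) the cochain $\Theta(A)$ is a Hochschild $3$-cocycle, i.e. $\delta_H\Theta(A)=0$, and by Balavoine (\cite{B}) the cochain $\Theta(L)$ is a Leibniz $3$-cocycle, i.e. $\delta_L\Theta(L)=0$. So the real content is the compatibility: I must verify the cross equations
\begin{align*}
\delta_H\Theta^1 + \delta_L\Theta(A) &= 0,\\
\delta_v\Theta^2 \pm \delta_L\Theta^1 &= 0 \quad\text{(appropriate sign from the bicomplex)},\\
\delta_L\Theta^2 \pm \delta_v\Theta(L) &= 0,
\end{align*}
which is exactly the assertion that the tuple $(\Theta(A),\Theta^1,\Theta^2,\Theta(L))$ is killed by $\delta_{tot}$ in $C^3_{tot}$.

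First I would set up the bookkeeping: write each component as the degree-$(N+1)$ part of the corresponding "deformation defect" and use the order-$N$ hypothesis, namely that $(\alpha_t,\mu_t,\lambda_t)$ satisfies \eqref{D5}--\eqref{D8} for all $n\le N$. The standard device (as in \cite{G1}, \cite{LeibP}) is to introduce the truncated structures and expand the associativity/Leibniz/compatibility "pentagon" identities one degree higher than is currently known to hold; the failure to vanish at degree $N+1$ is precisely $\Theta$, and applying the differential to $\Theta$ produces a sum of terms each of which, by the already-known identities in degrees $\le N$, telescopes to zero. Concretely, for $\delta_H\Theta(A)=0$ one uses associativity of $\alpha_i$-products in low degree; for the mixed term $\delta_H\Theta^1 + \delta_L\Theta(A)=0$ one differentiates \eqref{D5} "virtually" at order $N+1$ and reorganizes using \eqref{D8} at orders $\le N$, exploiting that $\mu_0=\mu$ acts by derivations and that $\delta_H$ is a Leibniz-module homomorphism (Proposition in Section 4). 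The term involving $\delta_v$ and $\Theta^2$, $\Theta(L)$ is handled by differentiating \eqref{D6} and \eqref{D7} similarly, using that $\mu_0$ is a Leibniz homomorphism into $Der(A)$.

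The step I expect to be the genuine obstacle is the verification of the two mixed identities involving $\Theta^1$ and $\Theta^2$ — i.e. the interaction between the Hochschild side and the Leibniz side through the homomorphism $\mu$. These do not follow from either Gerstenhaber's or Balavoine's results and require a direct, somewhat intricate computation: one has to expand $\delta_H\Theta^1$, $\delta_L\Theta(A)$, $\delta_L\Theta^1$, and $\delta_v\Theta^2$ fully, reindex the double sums $\sum_{i+j=N+1,\,i,j>0}$, and cancel terms in pairs using the order-$N$ compatibility equations; the signs coming from $(-1)^p\delta_L$ in $\delta_{tot}$ and from the Leibniz coboundary formula must be tracked with care. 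Because the authors themselves flag in the introduction that "the other part is done by a direct computation and given in an appendix," I would structure the proof so that the two pure cocycle conditions are cited, the purely formal reorganization is sketched in the main text, and the lengthy sign-chasing for the mixed terms is deferred to an appendix; the key conceptual input making it work is that $\mu_0\colon L\to Der(A)$ is a Leibniz homomorphism and that all vertical maps in the bicomplex are Leibniz-module homomorphisms, so horizontal and vertical differentials commute, which is what allows the cross terms to pair up and annihilate.
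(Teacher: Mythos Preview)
Your plan matches the paper's proof almost exactly: cite Gerstenhaber for $\delta_H\Theta(A)=0$, cite Balavoine for $\delta_L\Theta(L)=0$, and verify the three mixed identities by direct computation relegated to an appendix. Two small corrections are worth noting. First, in your list of cross equations the middle one should read $\delta_H\Theta^2 + \delta_L\Theta^1 = 0$, not $\delta_v\Theta^2$: the cochain $\Theta^2$ lives in $C^{1,2}=Hom(L^2,C^1(A,A))$, i.e.\ at $p=1$, where the vertical map is the Hochschild differential $\delta_H$; the special map $\delta_v$ is only used on the bottom row $p=0$ (it enters in the identity $\delta_v\Theta(L)-\delta_L\Theta^2=0$, which you do list). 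Second, the signs you left as $\pm$ are determined by $\delta_{tot}|_{C^{p,q}}=\delta_H+(-1)^p\delta_L$ and work out to $\delta_H\Theta^1-\delta_L\Theta(A)=0$, $\delta_H\Theta^2+\delta_L\Theta^1=0$, $\delta_v\Theta(L)-\delta_L\Theta^2=0$, exactly the paper's Lemmas A.1--A.3. One technical device the paper uses in the appendix that you do not mention: it rewrites $\mu_i(x,-)$ as $f_i^x\in C^1(A,A)$ and expresses $\Theta^1(x)=\sum[f_i^x,\alpha_j]$, $\Theta^2(x,y)=\sum([f_i^x,f_j^y]-f_i^{\lambda_j(x,y)})$, and $\Theta(A)=\tfrac12\sum[\alpha_i,\alpha_j]$ in terms of the Gerstenhaber bracket on $C^\ast(A,A)$, so that the cancellations come from the graded Jacobi identity rather than from raw reindexing of double sums. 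This is not a different argument, just a cleaner bookkeeping for exactly the ``intricate computation'' you anticipated.
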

\begin{proof}
$\Theta(A)$ is the obstruction cochain for extending the $N$-th order deformation of the associative algebra $A$ to a deformation of order $N+1$,  So, it is a $3$-cocycle in the Hochschild complex for associative algebra, i.e.
\begin{equation}\label{D20}
\delta_H\Theta(A) = 0.
\end{equation}
 
Now, $\Theta(L)$ is the obstruction cochain for extending the $N$-th order deformation of  the Leibniz algebra $L$ to a deformation of order $N+1$. So, it is a $3$-cocycle in the deformation complex for a Leibniz algebra, i.e.  
\begin{equation}\label{D21}
\delta_L\Theta(L) = 0.
\end{equation}
We have the following equations using Lemma \ref{A.1}, \ref{A.2}, \ref{A.3}[see Appendix for details] :
\begin{align}
\delta_H\Theta^1-\delta_L\Theta(A)&=0;\label{D22}\\
\delta_H\Theta^2+\delta_L\Theta^1&=0;\label{D23}\\
\delta_v\Theta(L)-\delta_L\Theta^2&=0.\label{D24}
\end{align}
Using equations \eqref{D20}-\eqref{D24}, we have $\delta_{tot}(\Theta(\alpha_t,\mu_t,\lambda_t))=0$.
\end{proof}

\begin{Thm}
Let $(\alpha_t,\mu_t,\lambda_t)$ be a deformation of order $N$. Then $(\alpha_t,\mu_t,\lambda_t)$ extends to a deformation of order $N+1$ if and only if the cohomology class of the $3$-cocycle $\Theta(\alpha_t,\mu_t,\lambda_t)$ vanishes.
\end{Thm}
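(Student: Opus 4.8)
The statement is the classical "obstruction vanishes iff deformation extends" dichotomy, so the proof has two directions and the machinery is entirely the bookkeeping that was set up in Definition~\ref{obstruction cochains} and Theorem~\ref{3cocycle}. First I would write down what it means for $(\tilde{\alpha}_t,\tilde{\mu}_t,\tilde{\lambda}_t)=(\alpha_t+\alpha_{N+1}t^{N+1},\,\mu_t+\mu_{N+1}t^{N+1},\,\lambda_t+\lambda_{N+1}t^{N+1})$ to be a deformation of order $N+1$: it must satisfy equations \eqref{D5}--\eqref{D8} for $n=N+1$. Expanding each of those four equations and separating the terms with $i,j>0$ (which assemble into $\Theta(A),\Theta^1,\Theta^2,\Theta(L)$) from the terms with $i=0$ or $j=0$ (which, by the definitions of $\delta_H$, $\delta_L$, $\delta_v$ and the $n=0$ structure identities in Remark~4.5, assemble into $\delta_H\alpha_{N+1}$, $\delta_H\mu_{N+1}+\delta_L\alpha_{N+1}$, $\delta_v\lambda_{N+1}-\delta_L\mu_{N+1}$, $\delta_L\lambda_{N+1}$), the four equations become precisely
\[
\delta_H\alpha_{N+1}=\Theta(A),\quad \delta_L\lambda_{N+1}=\Theta(L),\quad \delta_H\mu_{N+1}+\delta_L\alpha_{N+1}=\Theta^1,\quad \delta_v\lambda_{N+1}-\delta_L\mu_{N+1}=\Theta^2
\]
up to signs. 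Bundled together with the sign convention $\delta_{tot}|_{C^{p,q}}=\delta_H+(-1)^p\delta_L$, this is exactly the single equation $\delta_{tot}(\alpha_{N+1},\mu_{N+1},\lambda_{N+1})=\Theta(\alpha_t,\mu_t,\lambda_t)$ in $C^3_{tot}$.

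Granting that reformulation, both directions are immediate. If $(\alpha_t,\mu_t,\lambda_t)$ extends to order $N+1$, then by the above there is a $2$-cochain $(\alpha_{N+1},\mu_{N+1},\lambda_{N+1})\in C^2_{tot}$ with $\delta_{tot}(\alpha_{N+1},\mu_{N+1},\lambda_{N+1})=\Theta(\alpha_t,\mu_t,\lambda_t)$, so $\Theta$ is a coboundary; since it is a cocycle by Theorem~\ref{3cocycle}, its cohomology class in $HL^3(A,L;A,L)$ is zero. Conversely, if the class of $\Theta$ vanishes, then $\Theta=\delta_{tot}(\beta)$ for some $\beta=(\alpha_{N+1},\mu_{N+1},\lambda_{N+1})\in C^2_{tot}$; reading the reformulation backwards, this $\beta$ gives the required $(N+1)$-st coefficients so that $(\alpha_t+\alpha_{N+1}t^{N+1},\mu_t+\mu_{N+1}t^{N+1},\lambda_t+\lambda_{N+1}t^{N+1})$ satisfies \eqref{D5}--\eqref{D8} at level $N+1$, i.e. it is a deformation of order $N+1$. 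I would present the computation once, carefully, for one of the four component equations (say \eqref{D6}, the mixed one, since it involves $\Theta^2$, $\delta_v$ and $\delta_L$ simultaneously) and state that the other three are analogous, referring back to Remark~4.5 where the $n=1$ version of exactly this expansion was already carried out.

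**Main obstacle.** The only real work is the term-sorting in the first paragraph: showing that the "$i=0$ or $j=0$" part of the expansion of \eqref{D5}--\eqref{D8} at $n=N+1$ is exactly $\delta_{tot}(\alpha_{N+1},\mu_{N+1},\lambda_{N+1})$, with the correct signs. The subtlety is that the original bracket/multiplication $\alpha_0,\lambda_0$ and the morphism $\mu_0=\mu$ enter both through the $j=0$ and the $i=0$ slots (e.g. $\mu_0(x,\alpha_{N+1}(a,b))$ and $\mu_{N+1}(x,\alpha_0(a,b))$ both appear), and one has to recognize the combination as the Hochschild, Leibniz, and $\delta_v$ differentials applied to the new cochains — this is where the identities "$\delta_H\phi=(-1)^{|\phi|}[\alpha_0,\phi]$" and the explicit formula for $\delta_L$ on $\mathrm{Hom}(L^q,C^p(A,A))$ from Remark~4.5 get used. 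Once the signs are pinned down (they match because Remark~4.5 already verified the $N=0\to 1$ case), everything else is a two-line cohomological argument. I do not anticipate any genuine difficulty beyond this bookkeeping, since Theorem~\ref{3cocycle} has already done the hard part of proving $\Theta$ is a cocycle.
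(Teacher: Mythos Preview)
Your proposal is correct and follows essentially the same approach as the paper: both directions rest on recognizing that equations \eqref{D5}--\eqref{D8} at level $n=N+1$ decompose into the obstruction $\Theta(\alpha_t,\mu_t,\lambda_t)$ (the $i,j>0$ terms) plus $\delta_{tot}(\alpha_{N+1},\mu_{N+1},\lambda_{N+1})$ (the $i=0$ or $j=0$ terms), so extendability is equivalent to $\Theta$ being a $\delta_{tot}$-coboundary. The paper's proof is terser---it simply states the component identities and the resulting $\delta_{tot}$ equation without carrying out the term-sorting you propose to exhibit---but the logic is identical.
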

\begin{proof}
Suppose that a deformation $(\alpha_t,\mu_t,\lambda_t)$ of order $N$ extends to a deformation of order $N+1$. Then equations \eqref{D5}-\eqref{D8} hold for $n=N+1$. As a result, we get
$\theta(L)=\delta_L \mu_{N+1}$, $\theta(A)=\delta_H \alpha_{N+1}$, $\theta^1= \delta_L \alpha_{N+1}+\delta_H \lambda_{N+1}$ and $\Theta^2=\delta_v \mu_{N+1}-\delta_L \lambda_{N+1} $. In other words, the obstruction cochain $\theta (\alpha_t,\mu_t,\lambda_t)=d_{tot}(\alpha_{N+1},\lambda_{N+1},\mu_{N+1})$. So its cohomology class vanishes.\\

Conversely, let $\theta (\alpha_t,\mu_t,\lambda_t)$ be a coboundary. Suppose
$\theta (\alpha_t,\mu_t,\lambda_t)=d_{tot} (\alpha_{N+1},\lambda_{N+1},\mu_{N+1})$ for some $2$-cochain $(\alpha_{N+1},\lambda_{N+1},\mu_{N+1}) \in C^2_{tot}$.
Set $$(\tilde{\alpha}_t,\tilde{\lambda}_t,\tilde{\mu}_t)=(\alpha_t+\alpha_{N+1}t^{N+1},\lambda_t+\lambda_{N+1}t^{N+1},\mu_t+\mu_{N+1}t^{N+1}).$$ Then $(\tilde{\alpha}_t,\tilde{\lambda}_t,\tilde{\mu}_t) $ satisfies  equations \eqref{D5}-\eqref{D8} for $0\leq n\leq N+1$. So  $(\tilde{\alpha}_t,\tilde{\lambda}_t,\tilde{\mu}_t)$ is an extension of $(\alpha_t,\mu_t,\lambda_t)$ of order $N+1$.
\end{proof}
\begin{Cor}
If $HL^3(A,L;A,L)=0$, then every $2$-cocycle in $C^2_{tot}$ is the infinitesimal of some deformation.\qed
\end{Cor}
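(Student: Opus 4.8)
The plan is to produce, from a given $2$-cocycle $(\alpha_1,\mu_1,\lambda_1)\in C^2_{tot}$, a formal one-parameter deformation $(\alpha_t,\mu_t,\lambda_t)$ of the Courant pair $(A,L)=(\alpha,\mu,\lambda)$ whose infinitesimal is $(\alpha_1,\mu_1,\lambda_1)$, by building the deformation one order at a time. First I would set $\alpha_t=\alpha+\alpha_1t$, $\lambda_t=\lambda+\lambda_1t$ and $\mu_t=\mu+\mu_1t$, and check that this truncated triple is a deformation of order $1$. This amounts to unwinding $\delta_{tot}(\alpha_1,\mu_1,\lambda_1)=0$ into its four components: together with the identities that hold because $(\alpha,\mu,\lambda)$ is a Courant pair (the $n=0$ case of \eqref{D5}--\eqref{D8}), the cocycle condition gives exactly \eqref{D5}--\eqref{D8} for $n=1$, which is the requirement in the definition of a deformation of order $1$. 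This is the base case of the induction.

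For the inductive step, assume that $(\alpha_t,\mu_t,\lambda_t)=\sum_{i=0}^{N}(\alpha_i,\mu_i,\lambda_i)t^i$ is a deformation of order $N$ extending the prescribed infinitesimal. By Theorem \ref{3cocycle}, its obstruction cochain $\Theta(\alpha_t,\mu_t,\lambda_t)\in C^3_{tot}$ is a $3$-cocycle; since $HL^3(A,L;A,L)=0$ it is a coboundary, so by the extension criterion proved just above (a deformation of order $N$ extends to order $N+1$ precisely when the class of $\Theta$ vanishes) there is a $2$-cochain $(\alpha_{N+1},\mu_{N+1},\lambda_{N+1})\in C^2_{tot}$ for which $\sum_{i=0}^{N+1}(\alpha_i,\mu_i,\lambda_i)t^i$ is a deformation of order $N+1$. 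Crucially, this step only adjoins a new coefficient and never alters the previously fixed $\alpha_i,\mu_i,\lambda_i$ for $i\le N$, so iterating it yields well-defined elements $\alpha_t=\sum_{i\ge0}\alpha_it^i$, $\mu_t=\sum_{i\ge0}\mu_it^i$, $\lambda_t=\sum_{i\ge0}\lambda_it^i$ of the relevant $\mathbb{K}[[t]]$-modules; since the defining equations \eqref{D1}--\eqref{D4} then hold modulo $t^{N+1}$ for every $N$, they hold identically, and $(\alpha_t,\mu_t,\lambda_t)$ is a formal deformation of $(A,L)$ with infinitesimal $(\alpha_1,\mu_1,\lambda_1)$, as required.

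The substantive content of the argument is entirely contained in Theorem \ref{3cocycle} and the extension criterion; everything else is the routine assembly of a formal power series from a compatible system of truncations. The one place where a small computation is needed is the base case — verifying that the four components of $\delta_{tot}$ applied to a $2$-cochain agree with the first-order parts of \eqref{D5}--\eqref{D8} — but this is exactly the calculation already recorded in the remark following \eqref{D8}, so no new difficulty arises. Thus, under the hypothesis $HL^3(A,L;A,L)=0$, the assignment of its infinitesimal to a deformation is surjective onto the space of $2$-cocycles in $C^{*}_{tot}$, which is the assertion of the corollary.
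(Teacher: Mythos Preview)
Your argument is correct and is exactly the intended one: the paper gives no explicit proof (the corollary is marked with \qed\ as an immediate consequence of Theorem~\ref{3cocycle} and the extension criterion preceding it), and you have simply spelled out the standard induction that those two results make possible. There is nothing to add or correct.
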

\begin{Exm}
Let L be the real three-dimensional Hisenberg Lie algebra with basis $\{e_1,e_2,e_3\}$. Then the Lie bracket $[-,-]: L\otimes L \rightarrow L$ is given by $[e_1,e_3]=-[e_3,e_1]=e_2$, and all other products of basis elements to be zero.

Define a linear map $\mu : L \rightarrow \chi (\mathbb{R}^3),$ where $\mu(e_1)=\frac \partial{\partial{x_1}}, \mu(e_2)=0, \mu(e_3)=0$. Then by considering the algebra of smooth functions  $C^{\infty}(\mathbb{R}^3)= A$, the Lie algebra $L$ equipped with the Lie algebra  homomorphism $\mu$ we  have a Leibniz pair $(A, L)= (C^{\infty}(\mathbb{R}^3), L)$.

Now by considering the Lie algebra $L$ as a Leibniz algebra and the map $\mu$ as morphism of Leibniz algebras we may treat  $(C^{\infty}(\mathbb{R}^3), L)$ as a Courant pair. It follows that  we have extra cohomology classes in the second cohomology space  $HL^2(A,L;A,L)$ of the Courant pair $(A,L)$ in comparison to the second cohomology space $H^2_{LP}(A,L;A,L)$ of the Leibniz pair $(A, L)$. 

Recall that $HL^2(L;L)$ denotes the second cohomology space of $L$ with coefficients in $L$ (considering $L$ as Leibniz algebra) and $H^2(L;L)$ denotes the second cohomology space of $L$ with coefficients in $L$ (considering $L$ as Lie algebra). Then the set $\{ [\phi_1],[\phi_2],[\phi_3]\}$, where $\phi_1(e_1,e_1)=e_2;~ \phi_2(e_3,e_3)=e_2; ~\phi_3(e_3,e_1)=e_2; ~\mbox{for all other}~1\leq i,~j \leq 3,~~\phi_1(e_i,e_j)=\phi_2(e_i,e_j)=\phi_3(e_i,e_j)=0,$ is a subset of a  basis of $HL^2(L;L)$, but none of these representing cocycle contained in the Lie algebra cohomology space $H^2(L;L)$. This computation is considered in the context of versal deformations of Leibniz algebras in \cite{Vers}.

  Applying the coboundary map $\delta_v$ in the deformation complex of Courant pair we have  $\delta_v(\phi_1)=\delta_v(\phi_2)=\delta_v(\phi_3)=0$. Consequently  
 $\{(0,0,[\phi_1]),(0,0,[\phi_2]),(0,0,[\phi_3])\}$ generates elements in the cohomology space $HL^2(A,L;A,L)$. But none of these elements  is in the Leibniz pair cohomology space $H_{LP}^2(A,L; A,L)$. 
 
 Define $(\alpha_t,\mu_t,\lambda_t)_i = (\alpha,\mu,\lambda)+t ~(0,0,\phi_i )$  for $i=1,2,3$. 
 Then we can check that these are  infinitesimal deformations of $(C^{\infty}(\mathbb{R}^3), L)$  which are also non-equivalent.
 
Moreover,  these deformations are obtained only when we consider the Leibniz pair $(C^{\infty}(\mathbb{R}^3), L)$ as a Courant pair. 

\end{Exm}

\newpage 

\appendix
\section{Obstruction Cocycle:} \label{App:Appendix}
In this appendix, we present an explicit computation involved  (in the proof of Theorem \ref{3cocycle} ) to show that an obstruction cochain arising in the extension of finite order deformation is  a $3$-cocycle. Here we follow the notations uded in the previous in the sections.

Let $(\alpha_t,\mu_t,\lambda_t)$ be a deformation of Courant pair $(A, L)$ of order $N$. Then for $n < N+1$, by using equations (\ref{D5}) to (\ref{D8}) respectively, we have following equations :
\begin{align}
 \sum_{i+j=n}\mu_i(x,\alpha_j(a,b))&=\sum_{i+j=n}{\alpha_i(a,\mu_j(x,b))+\alpha_i(\mu_j(x,a),b)}~~~ ~\mbox{for}~ x\in L,~ a,b \in A;\label{A25}\\
   \sum_{i+j=n}\mu_i(\lambda_j(x,y),a)&=\sum_{i+j=n}{\mu_i(x,\mu_j(y,a))-\mu_i(y,\mu_j(x,a))}~~~~ \mbox{for}~ x,y\in L,~ a\in A;\label{A26}\\
   \sum_{i+j=n}\lambda_i(x,\lambda_j(y,z))&=\sum_{i+j=n}{\lambda_i(\lambda_j(x,y),z)+\lambda_i(y,\lambda_j(x,z))} ~~~~\mbox{for}~ x,y,z\in L;\label{A27}\\
   \sum_{i+j=n}\alpha_i(a,\alpha_j(b,c))&=\sum_{i+j=n}\alpha_i(\alpha_j(a,b),c)~~~~~~ \mbox{for} ~a,b,c\in A \label{A28}.
\end{align}
Let us recall the cochains $\Theta(A),~\Theta^1~,\Theta^2,~\Theta(L)$ in Definition \ref{obstruction cochains},  given by the following equations:
\begin{align}
\Theta(A)(a,b,c)&=\sum_{i+j=N+1;~ i,j>0}{\alpha_i(a,\alpha_j(b,c))-\alpha_i(\alpha_j(a,b),c)} ;\label{A29}\\
\Theta^1(x,a,b)&= \sum_{i+j=N+1;~ i,j>0}{\mu_i(x,\alpha_j(a,b))-\alpha_j(a,\mu_i(x,b))-\alpha_j(\mu_i(x,a),b)};\label{A30}\\
\Theta^2(x,y,a)&= \sum_{i+j=N+1;~ i,j>0}-{\mu_i(\lambda_j(x,y),a)+\mu_i(x,\mu_j(y,a))-\mu_i(y,\mu_j(x,a))};\label{A31}\\
\Theta(L)(x,y,z)&= \sum_{i+j=N+1;~ i,j>0}-{\lambda_i(x,\lambda_j(y,z))+\lambda_i(y,\lambda_j(x,z))+\lambda_i(\lambda_j(x,y),z)}.\label{A32}
\end{align}
For $x\in L$, consider $\mu_i(x,-)=f_i^x(-)\in C^1(A,A)$. If we use the Gerstenhaber bracket for Hochschild cochains  (defined in remark \ref{DGLA}), for each $n<N+1$, we can rewrite the equations \eqref{A25} , \eqref{A26}, \eqref{A30} and \eqref{A31} respectively as follows:
\begin{equation}\label{c1}
\sum_{i+j=n}[f_i^x,\alpha_j]=0,
\end{equation}
\begin{equation}\label{c2}
\sum_{i+j=n}f_i^{\lambda_j(x,y)}=\sum_{i+j=n}[f_i^x,f_j^y].
\end{equation}
\begin{equation}\label{c3}
\Theta^1(x)=\sum_{i+j=N+1,~i,j>0}[f_i^x,\alpha_j],
\end{equation}
\begin{equation}\label{c4}
\Theta^2(x,y)=\sum_{i+j=N+1,~i,j>0}[f_i^x,f_j^y]-f_i^{\lambda_j(x,y)}.
\end{equation}
Next, we deduce the required identities satisfied by the above cochains.
\begin{Lem}\label{A.1}
The cochains $\Theta(A) \in Hom (A^3, A)$ and $\Theta^1 \in Hom (L, C^2(A,A))$ satisfy the identity,
$\delta_H\Theta^1-\delta_L\Theta(A)=0.$
\end{Lem}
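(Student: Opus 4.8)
\textbf{Proof strategy for Lemma \ref{A.1}.}
The plan is to expand both $\delta_H\Theta^1$ and $\delta_L\Theta(A)$ explicitly using the definitions of the Hochschild coboundary $\delta_H$ (acting on the associative-algebra argument slot of a cochain in $Hom(L,C^2(A,A))$) and of the horizontal Leibniz coboundary $\delta_L$ (acting on the Leibniz argument slot). Since $\Theta(A)\in Hom(A^3,A)$ sits in the $q=0$ column, $\delta_L\Theta(A)\in Hom(L,C^3(A,A))$ is simply the $L$-module action $x\mapsto [x,\Theta(A)]$ on Hochschild cochains (as defined in Section 4), while $\delta_H\Theta^1$ is obtained by applying the ordinary Hochschild differential in the four variables $a,b,c,d\in A$ to the $2$-cochain $\Theta^1(x,-,-)$. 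After this expansion, everything is a finite sum over $i+j=N+1$ with $i,j>0$ of terms built from $\alpha_i,\alpha_j$ and $\mu_i=f^x_i$; the goal is a purely formal cancellation.

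The cleanest route is to work with the Gerstenhaber-bracket reformulation already set up in the excerpt: write $\mu_i(x,-)=f^x_i\in C^1(A,A)$, so that by \eqref{c3} we have $\Theta^1(x)=\sum_{i+j=N+1,\,i,j>0}[f^x_i,\alpha_j]$, and by \eqref{A29} that $\Theta(A)=\sum_{i+j=N+1,\,i,j>0}\bigl([\alpha_i,\alpha_j]\text{-type terms}\bigr)$, i.e. $\Theta(A)=\tfrac12\sum_{i+j=N+1,\,i,j>0}[\alpha_i,\alpha_j]$ up to the usual sign/symmetrization bookkeeping. Then I would use Remark \ref{DGLA}: $\delta_H\Theta^1=(-1)^{|\Theta^1|}[\alpha_0,\Theta^1]$, and the $L$-action on a Hochschild cochain $\phi$ is $[x,\phi]=[f^x_0,\phi]$ in the Gerstenhaber bracket (this is exactly the statement that the module action is given by the adjoint action of $f^x_0=\mu_0(x,-)$). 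With these substitutions, $\delta_H\Theta^1-\delta_L\Theta(A)$ becomes a combination of iterated Gerstenhaber brackets of the $\alpha$'s and $f^x_j$'s, which I reduce using (i) the graded Jacobi identity for the Gerstenhaber bracket, (ii) the order-$n$ relations \eqref{c1} $\sum_{i+j=n}[f^x_i,\alpha_j]=0$ for every $n<N+1$, and (iii) the associativity relations $\sum_{i+j=n}[\alpha_i,\alpha_j]=0$ for $n<N+1$ (equivalently \eqref{A28}). The telescoping of the "missing" terms at total degree $N+1$ — those involving $\alpha_0$ or $f^x_0$ paired with a degree-$(N+1)$ piece — is precisely what produces $\delta_H\Theta^1$ and $\delta_L\Theta(A)$ as the two sides that must agree.

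The main obstacle is bookkeeping rather than conceptual: one must track the sign conventions carefully, since $\delta_L$ in the $p$-row carries the sign $(-1)^p$ (here $p=2$, giving $+\delta_L$ on $C^{2,*}$, but $\Theta(A)$ lives in the $p=3$ column after applying $\delta_L$, and $\Theta^1$ in the $p=2$ column), and the Gerstenhaber bracket reformulation introduces its own $(-1)^{(p-1)(q-1)}$ factors. Concretely I expect to verify the identity by direct expansion in the four $A$-variables: writing out $\delta_H\Theta^1(x;a,b,c,d)$ one gets $a\cdot\Theta^1(x;b,c,d)$, the three internal terms with products $ab$, $bc$, $cd$, and $\Theta^1(x;a,b,c)\cdot d$; writing out $(\delta_L\Theta(A))(x;a,b,c,d)=[x,\Theta(A)(a,b,c,d)]$—but $\Theta(A)$ only takes three arguments, so in fact $\delta_L$ here means the vertical-neighbour map and $\delta_L\Theta(A)\in Hom(L,C^3(A,A))$ is $[x,\Theta(A)](a,b,c)=[x,\Theta(A)(a,b,c)]-\sum\Theta(A)(\dots,[x,a_i],\dots)$, i.e. the $L$-module structure map on $C^3(A,A)$. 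Matching the three-variable expressions term by term, every summand of $\delta_H\Theta^1$ that is "purely associative" in the $\alpha$'s cancels against a summand of $\delta_L\Theta(A)$ by the order-$n$ relations \eqref{A25}/\eqref{A28} with $n=N+1$ restricted to $i,j>0$ being exactly the defect, and the cross terms involving one $f^x$ and two $\alpha$'s cancel in pairs using the derivation property of $f^x_0$ together with \eqref{c1}. I would present this as a half-page explicit computation, isolating the three types of terms (those with a lone $\alpha_0$, those with a lone $f^x_0$, and the genuine degree-$(N+1)$ pieces) and invoking the lower-order identities \eqref{A25}--\eqref{A28} to knock out the first two types.
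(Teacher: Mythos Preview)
Your approach is essentially the same as the paper's: rewrite $\Theta(A)=\tfrac12\sum[\alpha_i,\alpha_j]$ and $\Theta^1(x)=\sum[f^x_i,\alpha_j]$ in Gerstenhaber-bracket form, express $\delta_H$ as $[\alpha_0,-]$ and the $L$-action as $[f^x_0,-]$, then apply the graded Jacobi identity together with the lower-order relations \eqref{c1} and \eqref{A28}. Two small slips to fix in execution: $\delta_H\Theta^1(x)\in C^3(A,A)$ takes \emph{three} $A$-arguments, not four (you caught this mid-paragraph), and with the paper's conventions $\delta_L\Theta(A)(x)=-[x,\Theta(A)]$, so keep track of that sign when you write it up.
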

\begin{proof} We want to show that for $x \in L$ and $a,b,c \in A$ we have
\begin{equation*}
\delta_H\Theta^1(x)(a,b,c)-\delta_L\Theta(A)(x)(a,b,c)=0.
\end{equation*}
From equation \eqref{c1}, for each $n< N+1$ we write as
\begin{equation*}
[\alpha_n,f_0^x]+[\alpha_0,f_n^x]=\sum_{i+j=n,~i,j>0}[f_i^x,\alpha_j].
\end{equation*}
By using the DGLA structure on the Hochschild cochain space ( see Remark \ref{DGLA} ) 
\begin{equation*}
\delta_L(\alpha_n)(x)+\delta_H(f_n^x)=\sum_{i+j=n,~i,j>0}[f_i^x,\alpha_j].
%\mbox{~~(using Gerstenhaber bracket and Remark \ref{DGLA})}
\end{equation*}
Further, $\alpha_t$ is a deformation of $\alpha$ so we have the following equation using obstruction cocycles for associative algebras. 
\begin{equation}\label{q1}
\Theta(A)=1/2\sum_{i+j=N+1,~i,j>0}[\alpha_i,\alpha_j].
\end{equation}
Now, for $x \in L$, $\delta_L\Theta(A)(x)=-[x,\Theta(A)]$ and for $a,b,c \in A$,
\begin{align*}
[x,\Theta(A)](a,b,c)
&=\mu(x,\Theta(A)(a,b,c))-\Theta(A)(\mu(x,a),b,c)-\Theta(A)(a,\mu(x,b),c)-\Theta(A)(a,b,\mu(x,c))\\
\vspace{0.1cm}
&=f_0^x~o~\Theta(A)(a,b,c)-\Theta(A)~o~\mu(a,b,c)\\
\vspace{0.1cm}&=[f_0^x,\Theta(A)](a,b,c).
\end{align*}
By using equation \eqref{q1}, we have
\begin{align*}
[x,\Theta(A)]&=1/2\sum_{i+j=N+1,~i,j>0}[f_0^x,[\alpha_i,\alpha_j]]\\
\vspace{0.1cm}
&=-\sum_{i+j=N+1,~i,j>0}[\alpha_i,\delta_L(\alpha_j)(x)],~~\mbox{where}~\delta_L(\alpha_j)(x)=[\alpha_j,f_0^x].
\end{align*}
Therefore,
\begin{equation}\label{A2}
\delta_L\Theta(A)(x)=-[x,\Theta(A)]=\sum_{i+j=N+1,~i,j>0}[\alpha_i,\delta_L(\alpha_j)]
\end{equation}
Also using equation \eqref{c3}, 
\begin{equation*}
\Theta^1(x)=\sum_{i+j=N+1,~i,j>0}[f_i^x,\alpha_j].
\end{equation*}
Using Remark \ref{DGLA}, $\delta_H(f)=(-1)^{|f|}[\alpha_0,f]$ for $f\in C^p$.  So, we have 
\begin{equation}\label{A1}
\begin{split}
&\delta_H\Theta^1(x)\\
&=-\sum_{i+j=N+1,~i,j>0}[\alpha_0,[f_i^x,\alpha_j]]\\
&=-\sum_{i+j=N+1,~i,j>0}[\alpha_j,[\alpha_0,f_i^x]]-\sum_{i+j=N+1,~i,j>0}[f_i^x,[\alpha_j,\alpha_0]]\\
%&=-\sum_{i+j=N+1,~i,j>0}[\alpha_j,-\delta_L(\alpha_i)(x)+\sum_{m+n=i}[f^x_m,\alpha_n]]-\sum_{i+j=N+1,~i,j>0}[f_i^x,-1/2\sum_{m+n=j,~m,n>0}[\alpha_m,\alpha_n]]\\
%&=\sum_{i+j=N+1,~i,j>0}[\alpha_j,\delta_L(\alpha_i)(x)]-\sum_{i+j+k=N+1,~i,j,k>0}[\alpha_j,[f^x_i,\alpha_k]]+1/2\sum_{i+j+k=N+1,~i,j,k>0}[f_i^x,[\alpha_j,\alpha_k]].\\
&=\sum_{i+j=N+1,~i,j>0}[\alpha_j,\delta_L(\alpha_i)(x)]- \frac{1}{2}\sum_{i+j+k=N+1,~i,j,k>0}[\alpha_k,[f^x_i,\alpha_j]]\\
&- \frac{1}{2}\sum_{i+j+k=N+1,~i,j,k>0}[\alpha_j,[f^x_i,\alpha_k]]
~~+1/2\sum_{i+j+k=N+1,~i,j,k>0}[f_i^x,[\alpha_j,\alpha_k]].
%~~~~~~\mbox{(using graded Jacobi identity)}
\end{split}
\end{equation}
Finally, using expressions in equations \eqref{A1}, \eqref{A2}, and graded Jacobi identity for the Gerstenhaber bracket (see Remark \ref{DGLA}), we have the required identity
$$\delta_H\Theta^1-\delta_L\Theta(A)=0.$$
\end{proof}

\begin{Lem}\label{A.2}
The cochains $\Theta^1 \in Hom (L,C^2(A, A))$ and $\Theta^2 \in Hom (L^2, C^1(A,A))$ satisfy the identity,
$\delta_H\Theta^2+\delta_L\Theta^1=0.$

\end{Lem}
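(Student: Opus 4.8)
The plan is to follow the pattern of the proof of Lemma~\ref{A.1}: fix $x,y\in L$, work throughout with the Gerstenhaber bracket on Hochschild cochains, write $f_i^z=\mu_i(z,-)\in C^1(A,A)$ for $z\in L$, and use the closed forms \eqref{c3}, \eqref{c4} for $\Theta^1$, $\Theta^2$ together with the lower-order relations \eqref{c1}, \eqref{c2}, which are available since $(\alpha_t,\mu_t,\lambda_t)$ is a deformation of order $N$. Evaluated at $(x,y)$ the assertion reads $\delta_H\Theta^2(x,y)+\delta_L\Theta^1(x,y)=0$ in $C^2(A,A)$, so it suffices to expand each summand.

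First I would expand the Leibniz side. Since $C^1(A,A)$ and $C^2(A,A)$ are symmetric $L$-modules with $[z,g]=[f_0^z,g]$ and $[g,z]=-[z,g]$, the Leibniz coboundary formula with $q=1$ gives
$$\delta_L\Theta^1(x,y)=[f_0^x,\Theta^1(y)]-[f_0^y,\Theta^1(x)]+\Theta^1(\lambda_0(x,y)),$$
and substituting \eqref{c3} turns each term into a double sum of nested brackets of the $f_i^z$'s with the $\alpha_j$'s. On the Hochschild side, $\Theta^2(x,y)\in C^1(A,A)$ has degree zero, so $\delta_H\Theta^2(x,y)=[\alpha_0,\Theta^2(x,y)]$; using \eqref{c4} and the graded Jacobi identity of Remark~\ref{DGLA} I would rewrite
$$[\alpha_0,[f_i^x,f_j^y]]=[f_i^x,\delta_H f_j^y]-[f_j^y,\delta_H f_i^x],$$
and then, since $i,j\le N$ in every term of $\Theta^2$, invoke \eqref{c1} to replace each $\delta_H f_i^x=[\alpha_0,f_i^x]$ by $[f_0^x,\alpha_i]+\sum_{k+l=i,\,k,l>0}[f_k^x,\alpha_l]$, and similarly rewrite the terms $[\alpha_0,f_i^{\lambda_j(x,y)}]$. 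The summands carrying $f_i^{\lambda_j(x,y)}$ with $j\ge1$ are then reorganized via \eqref{c2}, while the $j=0$ part reproduces exactly the expansion of $\Theta^1(\lambda_0(x,y))$ coming from the Leibniz side.

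After these substitutions both sides become explicit triple sums of nested Gerstenhaber brackets of $\alpha$'s, $f^x$'s and $f^y$'s. The main obstacle ---exactly as in the last step of the proof of Lemma~\ref{A.1}--- is the bookkeeping: one applies the graded Jacobi identity once more and uses the antisymmetry $[g,z]=-[z,g]$ of the module action to verify that every contribution indexed by integers $\le N$ cancels in pairs, leaving only the sum over $i+j=N+1$, $i,j>0$, which vanishes because it is precisely the order-$(N+1)$ truncation of the deformation identity \eqref{A26} for $\mu_t$, automatically satisfied up to order $N$. This parallels the computations of Gerstenhaber~\cite{G1} and Balavoine~\cite{B} and the one in \cite{LeibP}; the only extra care is for the mixed terms $f_i^{\lambda_j(x,y)}$, where the bracket being deformed is the (non-antisymmetric) Leibniz bracket $\lambda_t$ rather than a Lie bracket, so the resummations must be carried out using \eqref{c2} with no appeal to skew-symmetry.
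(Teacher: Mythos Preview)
Your strategy is exactly the paper's: fix $x,y$, write everything in terms of the Gerstenhaber bracket, expand $\delta_L\Theta^1(x,y)$ via \eqref{c3} and $\delta_H\Theta^2(x,y)=[\alpha_0,\Theta^2(x,y)]$ via \eqref{c4}, and then use graded Jacobi together with the lower-order relations \eqref{c1}, \eqref{c2} to see that all terms cancel. Two points in the write-up need correction, however.

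First, the sign in your Leibniz coboundary is wrong. With the $L$-module structure on $C^p(A,A)$ defined in Section~4 (so $[g,x]=-[x,g]$ and $[x,g]=[f_0^x,g]$), the formula for $q=1$ reads
\[
\delta_L\Theta^1(x,y)=[x,\Theta^1(y)]-[y,\Theta^1(x)]-\Theta^1(\lambda_0(x,y)),
\]
which is how the paper writes it in \eqref{A37}; you have $+\Theta^1(\lambda_0(x,y))$. This sign propagates through the whole computation, so it must be fixed before the cancellations can possibly work.

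Second, your explanation of the final cancellation is mistaken. You say that after the bookkeeping one is ``leaving only the sum over $i+j=N+1$, $i,j>0$, which vanishes because it is precisely the order-$(N+1)$ truncation of the deformation identity \eqref{A26} for $\mu_t$.'' But the order-$(N+1)$ instance of \eqref{A26} is \emph{not} available---its failure is exactly what $\Theta^2$ measures---so nothing can be concluded by invoking it. What actually happens (and what the paper does in the last step) is purely algebraic: once \eqref{c1} and \eqref{c2} have been used to eliminate the boundary indices $0$ in the nested brackets, the surviving triple sums over $i+j+k=N+1$ with $i,j,k>0$ cancel amongst themselves by one more application of the graded Jacobi identity for the Gerstenhaber bracket, with no appeal to any deformation relation at level $N+1$. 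Rewriting your last paragraph with this in mind (and the sign above corrected) gives a proof that matches the paper's.
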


\begin{proof}
Here, we will show that 
\begin{equation}\label{p1}
\delta_H\Theta^2(x,y)(a,b)+\delta_L\Theta^1(x,y)(a,b)=0
\end{equation}
for $x,y\in L$ and $a,b\in A$. From equation \eqref{c2}, for each $n<N+1$,
%$$\sum_{i+j=n}f_i^{\lambda_j(x,y)}=\sum_{i+j=n}[f_i^x,f_j^y].$$
\begin{equation}\label{A36}
f_0^{\lambda_n(x,y)}+f_n^{\lambda_0(x,y)}+\sum_{i+j=n,~i,j>0}f_i^{\lambda_j(x,y)}=\sum_{i+j=n,~i,j>0}[f_i^x,f_j^y]+[f_0^x,f_n^y]+[f_n^x,f_0^y].
\end{equation}
Now, 
\begin{align}
\delta_L\Theta^1(x,y)&=[x,\Theta^1(y)]-[y,\Theta^1(x)]-\Theta^1[x,y]\label{A37}\\
&=\sum_{i+j=N+1,~i,j>0}[f_0^x,[f_i^y,\alpha_j]]-[f_0^y,[f_i^x,\alpha_j]]-[f_i^{[x,y]},\alpha_j]\label{A38} ~~\mbox{(using equation \eqref{c3})}.
\end{align}
In equation \eqref{A38}, we can write the first two terms in the summation in right hand side as:
\begin{multline}\label{A43}
[f_0^x,[f_i^y,\alpha_j]]-[f_0^y,[f_i^x,\alpha_j]]= -[f_i^y,[f_j^x,\alpha_0]]+[f_i^x,[f_j^y,\alpha_0]]-\sum_{m+n=j,~m,n>0}[f_i^y,[f_m^x,\alpha_n]]\\
+\sum_{m+n=j,~m,n>0}[f_i^x,[f_m^y,\alpha_n]]-[\alpha_j,[f_0^x,f_i^y]]+[\alpha_j,[f_0^y,f_i^x]].
\end{multline}
Also in equation \eqref{A43}, last two terms of the right hand side can be written as,
\begin{equation}\label{A45}
\begin{split}
  -[\alpha_j,[f_0^x,f_i^y]]+[\alpha_j,[f_0^y,f_i^x]]&=[\alpha_j,-[f_0^x,f_i^y]+[f_0^y,f_i^x]]\\
  &=[\alpha_j,\sum_{m+n=i,~m,n>0}[f_m^x,f_n^y]-f_0^{\lambda_i(x,y)}-f_i^{[x,y]}-\sum_{m+n=i,~m,n>0}f_m^{\lambda_n(x,y)}].
  \end{split}
\end{equation}
Using equations \eqref{A45} and \eqref{A43}, we can write equation \eqref{A38} as follows:
\begin{multline}\label{A2.4}
\delta_L\Theta^1(x,y)=\sum_{i+j=N+1,~i,j>0}-[f_i^y,[f_j^x,\alpha_0]]+[f_i^x,[f_j^y,\alpha_0]]-\sum_{m+n=j,~m,n>0}[f_i^y,[f_m^x,\alpha_n]]\\
+[f_i^x,[f_m^y,\alpha_n]]+[\alpha_j,\sum_{m+n=i,~m,n>0}[f_m^x,f_n^y]-f_0^{\lambda_i(x,y)}-f_i^{[x,y]}-f_m^{\lambda_n(x,y)}].
\end{multline}
Now, we proceed to compute the expression of $\delta_H\Theta^2(x,y)$. First, note that using Remark \ref{DGLA} we have
\begin{equation*}
\delta_H\Theta^2(x,y)=[\alpha_0,\Theta^2(x,y)].
\end{equation*}
Consequently, by equation \eqref{c4}
%\begin{equation*}
%\Theta^2(x,y)=-\sum_{i+j=N+1,~i,j>0}f_i^{\lambda_j(x,y)}-\sum_{i+j=N+1,~i,j>0}[f_i^y,f_j^x]
%\end{equation*}
%So,
\begin{equation}
\delta_H\Theta^2(x,y)=-\sum_{i+j=N+1,~i,j>0}[\alpha_0,f_i^{\lambda_j(x,y)}]-\sum_{i+j=N+1,~i,j>0}[\alpha_0,[f_i^y,f_j^x]].\label{A42}
\end{equation}
Also, for fixed $j$, using Gerstenhaber bracket in equation \eqref{A25} we get 
\begin{equation}\label{A46}
  [\alpha_0,f_i^{\lambda_j(x,y)}]=\sum_{m+n=i,~m,n>0}[f_m^{\lambda_j(x,y)},\alpha_n]-[\alpha_i,f_0^{\lambda_j(x,y)}].
\end{equation}
If we consider equations \eqref{A46} and replace the first term on the right hand side in equation \eqref{A42}, we get the expression of $\delta_H\Theta^2(x,y)$.
\begin{equation}\label{A2.3}
\delta_H\Theta^2(x,y)=-\sum_{i+j=N+1,~i,j>0}~~\sum_{m+n=i,~m,n>0}[f_m^{\lambda_j(x,y)},\alpha_n]-[\alpha_i,f_0^{\lambda_j(x,y)}]-[\alpha_0,[f_i^y,f_j^x]]
\end{equation}
Finally, adding equations \eqref{A2.3} and \eqref{A2.4} and then applying graded Jacobi identity for the Gerstenhaber bracket, we get the required result
$$\delta_H\Theta^2+\delta_L\Theta^1=0.$$
\end{proof}
\begin{Lem}\label{A.3}
The cochains $\Theta^2 \in Hom (L^2, C^1(A,A))$ and $\Theta(L) \in Hom (L^3,L))$satisfy the identity,
$\delta_v\Theta(L)-\delta_L\Theta^2=0.$
\end{Lem}
\begin{proof}We want to show that
\begin{equation}\label{r1}
\delta_v\Theta(L)(x,y,z)(a)-\delta_L\Theta^2(x,y,z)(a)=0.
\end{equation}
for $x,y,z\in L$ and $a\in A$. 
First, we compute $\delta_L\Theta^2(x,y,z)(a)$. From definition  of $\delta_L$ we have
\begin{equation}\label{A47}
\begin{split}
&\delta_L\Theta^2(x,y,z)\\
=&-[x,\Theta^2(y,z)]+[y,\Theta^2(x,z)]-[z,\Theta^2(x,y)]
+\Theta^2([x,y],z)-\Theta^2(x,[y,z])+\Theta^2(y,[x,z]).
\end{split}
\end{equation}
%Now we proceed by considering expression of $\Theta^2(x,y)$ in equation \eqref{c4},
%\begin{equation*}
%\Theta^2(x,y)=\sum_{i+j=N+1,~i,j>0}[f_i^x,f_j^y]-f_i^{\lambda_j(x,y)}.
%\end{equation*}
In equation \eqref{A47}, using the  action of $L$ on $C^p(A,A)$ (recall from Section 4) and the Gerstenhaber bracket for Hochschild cochains, the first term in the right hand side can be written as:
\begin{equation*}\label{A49}
\begin{split}
[x,\Theta^2(y,z)]=&[f_0^x,\Theta^2(y,z)]\\
=& \sum_{i+j=N+1,~i,j>0}[f_0^x,[f_i^y,f_j^z]]-\sum_{i+j=N+1,~i,j>0}[f_0^x,f_i^{\lambda_j(y,z)}]].
\end{split}
\end{equation*}
Similarly, we write the expressions for $[y,\Theta^2(x,z)]$ and $[z,\Theta^2(x,y)]$ respectively.
%have
% \begin{equation*}
 %[y,\Theta^2(x,z)]=\sum_{i+j=N+1,~i,j>0}[f_0^y,[f_i^x,f_j^z]]-\sum_{i+j=N+1,~i,j>0}[f_0^y,f_i^{\lambda_j(x,z)}]];
 %\end{equation*}
%and
 %\begin{equation*}
 %[z,\Theta^2(x,y)]=\sum_{i+j=N+1,~i,j>0}[f_0^z,[f_i^x,f_j^y]]-\sum_{i+j=N+1,~i,j>0}[f_0^z,f_i^{\lambda_j(x,y)}]].
 %\end{equation*}
Further, in equation \eqref{A47}, the last three terms in the right hand side can be expressed as follows.
\begin{equation*}
  \Theta^2(x,[y,z])=\sum_{i+j=N+1,~i,j>0}[f_i^x,f_j^{\lambda_0(y,z)}]-f_i^{\lambda_j(x,\lambda_0(y,z))};
\end{equation*}

\begin{equation*}
  \Theta^2(y,[x,z])=\sum_{i+j=N+1,~i,j>0}[f_i^y,f_j^{\lambda_0(x,z)}]-f_i^{\lambda_j(y,\lambda_0(x,z))};
\end{equation*}

\begin{equation*}
  \Theta^2([x,y],z)=\sum_{i+j=N+1,~i,j>0}[f_i^{\lambda_0(x,y)},f_j^z]-f_i^{\lambda_j(\lambda_0(x,y),z)}.
\end{equation*}

Now, we will compute $\delta_v\Theta(L)(x,y,z)$ in equation \eqref{r1} as follows:
\begin{equation*}
\delta_v(\Theta(L)(x,y,z))(a)=f_0^{\Theta(L)(x,y,z)}(a).
\end{equation*}
By equation \eqref{A32},
\begin{equation*}
\Theta(L)(x,y,z)= \sum_{i+j=N+1;~ i,j>0}{-\lambda_i(x,\lambda_j(y,z))+\lambda_i(y,\lambda_j(x,z))+\lambda_i(\lambda_j(x,y),z)}.
\end{equation*}
Thus
\begin{equation}\label{r2}
\delta_v(\Theta(L)(x,y,z))(a)=\sum_{i+j=N+1;~ i,j>0}-f_0^{\lambda_i(x,\lambda_j(y,z))}+f_0^{\lambda_i(y,\lambda_j(x,z))}+f_0^{\lambda_i(\lambda_j(x,y),z)}(a).
\end{equation}
We proceed by rewritting the equation \eqref{A26} as follows:
\begin{equation}\label{A53}
   f_0^{\lambda_i(x,y)}=-f_i^{\lambda_0(x,y)}-\sum_{m+n=i,~m,n>0}f_m^{\lambda_n(x,y)}
   +\sum_{m+n=i,~m,n>0}[f_m^x,f_n^{y}]+[f_0^x,f_i^{y}]+[f_i^x,f_0^{y}].
\end{equation}
Replacing $y$ by $\lambda_j(y,z)$, we can write the first term in the right hand side of the equation \eqref{r2} as follows.
\begin{equation}\label{A54}
\begin{split}
   f_0^{\lambda_i(x,\lambda_j(y,z))}=&-f_i^{\lambda_0(x,\lambda_j(y,z))}-\sum_{m+n=i,~m,n>0}f_m^{\lambda_n(x,\lambda_j(y,z))}
   +\sum_{m+n= i,~m,n>0}[f_m^x,f_n^{\lambda_j(y,z)}]\\
   &+[f_0^x,f_i^{\lambda_j(y,z)}]+[f_i^x,f_0^{\lambda_j(y,z)}].
   \end{split}
\end{equation}
Using equation \eqref{A53}, the last term in the right hand side of equation \eqref{A54} can be written as,
\begin{multline}\label{A58}
  [f_i^x,f_0^{\lambda_j(y,z)}]=-[f_i^x,f_j^{\lambda_0(y,z)}]-\sum_{p+q=j,~p,q>0}[f_i^x,f_p^{\lambda_q(y,z)}]+\sum_{p+q=j,~p,q>0}[f_i^x,[f_p^y,f_q^z]]
                                \\+[f_i^x,[f_0^y,f_j^z]]+[f_i^x,[f_j^y,f_0^z]].
\end{multline}
Similarly, we can write the second and third term in the right hand side of the  equation \eqref{r2}. 

Now, substituting the final expression of $\delta_v\Theta(L)(x,y,z)(a)$ and $\delta_L\Theta^2(x,y,z)(a)$  in the left hand side of the equation (\ref{r1}) and
%is of the following form:
%\begin{multline}\label{r3}
%\Bigg(\sum_{i+j=N+1,~i,j>0}f_i^{\lambda_j(x,\lambda_0(y,z))}+\sum_{i+j=N+1,~i,j>0}f_i^{\lambda_0(x,\lambda_j(y,z))}
%+\sum_{i+j+k=N+1,~i,j,k>0}f_i^{\lambda_j(x,\lambda_k(y,z))}\\-\sum_{i+j=N+1,~i,j>0}f_i^{\lambda_j(\lambda_0(x,y),z)}-\sum_{i+j=N+1,~i,j>0}f_i^{\lambda_0(\lambda_j(x,y),z)}
%-\sum_{i+j+k=N+1,~i,j,k>0}f_i^{\lambda_j(\lambda_k(x,y),z)}\\
%-\sum_{i+j=N+1,~i,j>0}f_i^{\lambda_j(y,\lambda_0(x,z))}-\sum_{i+j=N+1,~i,j>0}f_i^{\lambda_0(x,\lambda_j(y,z))}
%-\sum_{i+j+k=N+1,~i,j,k>0}f_i^{\lambda_j(y,\lambda_k(x,z))}\Bigg)(a).
%\end{multline}
%Finally the result follows
 by considering equation \eqref{A27} we get the result.
 %and using the expression we get for each $n<N+1$
%\begin{equation}\label{A59}
 % \sum_{i+j=n}{-\lambda_i(x,\lambda_j(y,z))+\lambda_i(y,\lambda_j(x,z))+\lambda_i(\lambda_j(x,y),z)}=0.
%\end{equation}
\end{proof}

{\bf Conclusions:}

The deformation of a given structure characterize the local behaviour in the variety of a given type of objects. In order to study the deformation one needs a suitable notion of module or representation  and then construct a deformation complex. In this work we develop a formal deformation of Courant pairs which includes the classical cases of Lie algebras and  Leibniz pairs. Moreover, There are plenty of examples of the Courant pairs including the Leibniz algebras and Leibniz algebroids  appear in algebra, geometry and in Mathematical physics. A Courant pair $(A,L)$ is a Leibniz pseudoalgebra over $(\mathbb{K}, A )$  if $L$ is also an $A$-module  and the structure map is an $A$-module homomorphism such that following condition is satisfied:
$[x,fy]=f[x,y]+\mu(x)(f)y$ where $x,y\in L$ and $f\in A$. So it is also natural to expect a deformation theory for Leibniz- Rinehart algebras by deducing the required deformation cohomology.  As in the classical cases of algebras over quadratic operads (e.g., Ass, Lie, Leib,...etc. ), one may look for a differential graded Lie algebra structure on the cohomology space of a Courant pair. Also, a Courant pair can be described as a non skew-symmetric version of OCHA and subsequently as an algebra over an operad. These questions are planed to be addressed  in a separate discussion.

%\newpage

%\end{document}
%\vspace{.5cm}
{\bf Ashis Mandal}\\
 Department of Mathematics and Statistics,
Indian Institute of Technology,
Kanpur 208016, India.\\
e-mail: amandal@iitk.ac.in

\vspace{.5cm}
{\bf Satyendra Kumar Mishra}\\
Department of Mathematics and Statistics,
Indian Institute of Technology,
Kanpur 208016, India.\\
e-mail: satyendm@iitk.ac.in

\end{document}